\newcommand{\arxiv}[1]{\href{http://arxiv.org/abs/#1}{arXiv:#1}}
\newtheorem{theorem}{Theorem}[section]
\newtheorem{lemma}[theorem]{Lemma}
\newtheorem{corollary}[theorem]{Corollary}
\newtheorem*{proposition*}{Proposition}
\theoremstyle{definition}
\newtheorem{remark}[theorem]{Remark}
\numberwithin{equation}{section}
\newcommand{\R}{\mathbb{R}}
\newcommand{\C}{\mathbb{C}}
\newcommand{\nn}{\nonumber}
\newcommand{\be}{\begin{equation}}
\newcommand{\ee}{\end{equation}}
\newcommand{\bea}{\begin{eqnarray}}
\newcommand{\eea}{\end{eqnarray}}
\newcommand{\ol}{\overline}
\newcommand{\pa}{\partial}
\newcommand{\id}{\mathbb{I}}
\newcommand{\I}{\mathrm{i}}
\newcommand{\E}{\mathrm{e}}
\newcommand{\re}{\mathop{\mathrm{Re}}}
\newcommand{\im}{\mathop{\mathrm{Im}}}
\DeclareMathOperator{\res}{Res}
\newcommand{\noprint}[1]{}
\newcommand{\dlmf}[1]{%
\cite[%
  \def\nextitem{\def\nextitem{, }}%
  \@for \el:=#1\do{\nextitem\href{http://dlmf.nist.gov/\el}{(\el)}}%
]{dlmf}%
}
\newcommand{\ga}{\gamma}
\begin{document}
\title[Soliton asymptotics for the KdV shock problem]{Soliton asymptotics for the KdV shock problem of low regularity}

\author[I. Egorova]{Iryna Egorova}
\address{B. Verkin Institute for Low Temperature Physics and Engineering\\ 47, Nauky ave\\ 61103 Kharkiv\\ Ukraine}
\email{\href{mailto:iraegorova@gmail.com}{iraegorova@gmail.com}}

\author[J. Michor]{Johanna Michor}
\address{Faculty of Mathematics\\ University of Vienna\\
Oskar-Morgenstern-Platz 1\\ 1090 Wien\\ Austria}
\email{\href{mailto:Johanna.Michor@univie.ac.at}{Johanna.Michor@univie.ac.at}}
\urladdr{\href{http://www.mat.univie.ac.at/~jmichor/}{http://www.mat.univie.ac.at/\string~jmichor/}}

\author[G. Teschl]{Gerald Teschl}
\address{Faculty of Mathematics\\ University of Vienna\\
Oskar-Morgenstern-Platz 1\\ 1090 Wien\\ Austria\\ and Erwin Schr\"odinger International
Institute for Mathematics and Physics\\ Boltzmanngasse 9\\ 1090 Wien\\ Austria}
\email{\href{mailto:Gerald.Teschl@univie.ac.at}{Gerald.Teschl@univie.ac.at}}
\urladdr{\href{http://www.mat.univie.ac.at/~gerald/}{http://www.mat.univie.ac.at/\string~gerald/}}

\dedicatory{Dedicated to the memory of Sergey Naboko}

\keywords{KdV equation, shock problem, nonlinear steepest descent, low regularity, solitons}
\subjclass[2020]{Primary 37K40, 35Q53; Secondary 37K45, 35Q15}
\thanks{This research was supported by the Austrian Science Fund FWF [grant number P31651].}

\begin{abstract}
We revisit the asymptotic analysis of the KdV shock problem in the soliton region. Our approach is based
on the analysis of the associated Riemann--Hilbert problem and we extend the domain of validity of the asymptotic
formulas while at the same time requiring less decay and smoothness for the initial data.
\end{abstract}

\maketitle

\section{Introduction and main results}

The aim of this note is to revisit the RHP approach (introduced by Deift and Zhou \cite{DZ} extending ideas of Manakov \cite{ma} and Its \cite{its1})
for the study of the long-time asymptotics for solutions of the Korteweg--de Vries (KdV) equation
\be\label{kdv} 
q_t(x,t) - 6 q(x,t)q_x(x,t)+q_{xxx}(x,t)=0
\ee
with step-like initial data $q(x)=q(x,0)$ satisfying the condition
\be\label{ini1}
\lim_{x \to \infty} q(x)= 0, \quad \lim_{x \to -\infty} q(x) = -c^2, \quad c > 0.
\ee
This is known as the KdV shock problem and the solution will split into a decaying dispersive tail, a dispersive shock wave, and a number of solitons. 
Moreover, it was shown by Khruslov \cite{Kh} that at the wave front of the dispersive shock, $x=4c^2t$, solitons will
emerge which are not associated with points of the discrete spectrum.

However, while these principal regions are well understood (\cite{EGKT,EMT1,gp1,gp2}), the regions where the corresponding asymptotics are established do not overlap.
In particular, it is typically a quite delicate task to improve the domain of validity of
these formulas to achieve the aforementioned overlap. In this vein, the aim of the present paper is to refine the Riemann--Hilbert analysis
in the soliton region $x>4c^2t$ to both increase the domain of validity as well as weaken the decay and smoothness requirements for the initial data.
In particular, the degree of decay will appear in the domain of validity. 

More specifically, we assume that the initial data are such that $q(x)\in C^{n_0}(\R)$ and 
\be\label{low}
\int_{\R_+ } |x|^{m_0}\left( |q(x)| + |q(-x)+c^2|\right)dx + \int_\R x^{m_0-1} |q^{(i)}(x)|dx,\quad i=1, \dots, n_0,
\ee
where\be\label{mn}  m_0\geq 4,\quad n_0\geq m_0+3.\ee 
In the following we refer to the Cauchy problem \eqref{kdv}--\eqref{mn} as the {\em KdV shock problem of low regularity}. 
For comparison, the previously available results  for the shock problem \eqref{kdv}--\eqref{mn} from \cite{EGKT}
in the soliton region 
\[
x\geq(4c^2 +\varepsilon)t
\]
were established under the assumption of exponential decay:
\be\label{exp}
\int_{\R_+}  \E^{\rho x}\left( |q(x)| + |q(-x)+c^2|\right)dx, \quad \rho>c.
\ee 
Decaying (non steplike) initial data of low regularity were considered for the KdV equation in \cite{GT} (with $c=0$, $m_0=6$, $n_0=3$)
and for the mKdV equation in \cite{lenells1}. Both results are obtained outside a small sector containing the transition region $\frac x t\sim 0$, that is, for $x>\varepsilon t$. 

 In this connection, two interesting questions arise: Is it possible  to expand the boundary of the soliton region for the KdV shock problem (and thus narrowing the boundaries of the transition region) using the RHP approach? And is it even possible to achieve this under the low regularity assumptions \eqref{low}?
  
Using the classical inverse scattering transform the multisoliton asymptotics were recently derived in \cite{EMT1} in the expanded soliton region   
\be \label{solist}x>4c^2 t +\frac{m_0 -3/2 -\varepsilon}{2c}\log t, \quad m_0\ge 3.
\ee 
Namely, assume that the discrete spectrum of the scattering problem associated with \eqref{kdv}--\eqref{mn} is given by
$-\kappa_N^2<\dots<-\kappa_1^2$, and that the corresponding norming constants of the right eigenfunctions are given by 
$\gamma_j$, $j=1,\dots,N$. Then for $t\to \infty$ uniformly in the domain \eqref{solist} the solution to \eqref{kdv}--\eqref{mn} can be represented as
\be \label{asmain}
q(x,t)  = q^{sol}(x,t)+ O\bigg(\frac{1}{t^{m_0-\frac{3}{2}-\varepsilon}}\bigg),
\ee
where
\be\label{qsol}
q^{sol}(x,t)=-\sum_{j=1}^N\frac{2\kappa_j^2}{\cosh^2\big(\kappa_j x - 4\kappa_j^3 t -\frac{1}{2}\log\frac{\gamma_j^2}{2\kappa_j}-\sum_{i=j+1}^N\log\frac{\kappa_j - \kappa_i}{\kappa_i + \kappa_j}\big)}.
\ee
In the present paper we use the RHP approach to show the following result:

\begin{theorem}\label{mainint}
Assume that the initial datum $q(x)$ satisfies \eqref{low}--\eqref{mn}, does not have a resonance at the edge of the continuous spectrum $-c^2$,
and has a nonempty discrete spectrum $-\kappa_N^2<\dots<-\kappa_1^2$. Assume that $x\to\infty$, $t\to\infty$ such that 
\be\label{delD}
(x,t)\in \mathcal D:= \left\{x\geq 4c^2 t + \frac{\beta}{c}\log t, \quad t\gg 1, \quad \beta\geq 0\right\}.
\ee
 Then in the domain $\mathcal D$ we have
 \be
 q(x,t)  = q^{sol}(x,t)+O\bigg(\frac{1}{t^{\nu}}\bigg), \quad \nu=\min\{m_0-3, \beta+1\}\geq 1.
 \ee
\end{theorem}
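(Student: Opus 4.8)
The plan is to set up the Riemann--Hilbert problem for the KdV shock problem in the standard way and then perform a nonlinear steepest descent analysis tailored to the soliton region $x > 4c^2 t$. First I would recall that the solution $q(x,t)$ is recovered from a meromorphic RH problem on a contour consisting of the real line (carrying the jump coming from the reflection coefficient $R(k)$ for $k \in \R$, together with the jump across the band $[-\I c, \I c]$ or $[0,c]$ depending on the chosen variable) and a finite set of poles at the discrete spectrum $\I\kappa_j$ with residue conditions governed by the norming constants $\ga_j$. The low-regularity assumptions \eqref{low}--\eqref{mn} should be used precisely at this stage: they guarantee that $R(k)$ and the relevant scattering data lie in a space with enough decay and differentiability (roughly $m_0$ derivatives decaying like $|k|^{-(n_0-1)}$ or similar) so that the contour deformations and the final small-norm estimate go through with the claimed error rate.

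The core of the argument is the phase function $\Phi(k) = 8\I k^3 t + 2\I k x$ (in an appropriate normalization), whose stationary phase points and sign structure control everything. In the soliton region $x \ge 4c^2 t + \frac\beta c \log t$, I would check that $\re\Phi$ has a definite sign on the real axis and on the band so that the jump matrices on the continuous spectrum are exponentially close to the identity (uniformly, with a rate that degrades as one approaches the boundary $x = 4c^2 t$, which is where the $\log t$ shift and the condition $\beta \ge 0$ enter). The next step is to conjugate away the poles: introduce a Blaschke-type product (the standard $\prod (k-\I\kappa_j)/(k+\I\kappa_j)$-style factors, combined with the partial transmission coefficient used in \cite{EGKT,EMT1}) so that all but possibly the relevant solitons are removed, and the residue conditions are replaced by jumps on small circles that are again exponentially small once $(x,t)\in\mathcal D$. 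One must be careful that the conjugation does not spoil the normalization at $k=\infty$ and at $k=\pm\I c$; the absence of a resonance at $-c^2$ is what makes this clean.

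After these deformations one is left with an explicitly solvable model RH problem whose only nontrivial data are the residue conditions at the $\I\kappa_j$ with the correctly \emph{shifted} norming constants — this is exactly the $N$-soliton RH problem, and its solution reproduces $q^{sol}(x,t)$ with the phase shifts $-\frac12\log\frac{\ga_j^2}{2\kappa_j} - \sum_{i>j}\log\frac{\kappa_j-\kappa_i}{\kappa_i+\kappa_j}$ appearing in \eqref{qsol}; the ordering of the $\kappa_j$ dictates which partial products show up. The error term comes from a small-norm estimate: the difference between the full problem and the model problem is controlled by the $L^2\cap L^\infty$ norm of the jump discrepancies on the continuous spectrum and the small circles. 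The exponentially small contributions from the circles and from the non-stationary part of $\R$ are negligible; the dominant error is governed by the behavior of $R(k)$ near $k=0$ (or $k=\pm\I c$), and balancing the decay rate of $R$ (which is where $m_0$ enters, giving $t^{-(m_0-3)}$) against the phase smallness near the boundary of $\mathcal D$ (which is where $\beta$ enters, giving $t^{-(\beta+1)}$) yields $\nu = \min\{m_0-3,\beta+1\}$.

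The main obstacle I expect is the low-regularity bookkeeping in the contour deformation: with only finitely many derivatives of $q$ and only polynomial (not exponential) decay, one cannot deform $R(k)$ far off the real axis, so instead of an analytic continuation one must use the standard $\bar\partial$ / rational-approximation trick — split $R$ into an analytic part that can be deformed and a small remainder estimated directly, keeping track of exactly how many derivatives are consumed and what decay is needed to make the remainder $O(t^{-\nu})$ uniformly up to the boundary $x = 4c^2 t + \frac\beta c\log t$. Making this uniform as $\beta \to 0$, so that the domain $\mathcal D$ genuinely reaches down to a logarithmic neighborhood of the wave front, is the delicate point and is presumably where the interplay between \eqref{mn} and the shift in \eqref{delD} is essential.
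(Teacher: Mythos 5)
Your outline follows the same route as the paper: conjugation of the poles by Blaschke-type factors into jumps on small circles (keeping only the ``active'' soliton in each window $D_j^{sol}$), a rational approximation of $R(k)$ at $k=0$ matching the first $m_0-1$ Taylor coefficients, a Fourier splitting of the remainder into an analytic part deformed to $\im k=\pm c/2$ and a small part estimated directly on $\R$ (this is exactly the paper's $\mathcal R=\mathcal R_a+\mathcal R_r$, giving the $t^{-(m_0-3)}$ contribution), and a final small-norm argument. You also correctly locate the source of the $t^{-(\beta+1)}$ rate near $k=\I c$: in the paper this comes from the jump $\chi(k)P_{j+1}^{-2}(k)\E^{2t\Phi(k)}$ on $[\I c,\tfrac{\I c}{2}]$, where nonresonance gives $\chi(k)\sim C(k-\I c)$ and maximizing $(c-\kappa)\E^{-C(c-\kappa)t}$ yields $O(t^{-1})$, improved to $O(t^{-\beta-1})$ by the logarithmic shift in $\mathcal D$.

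There is, however, one concrete gap at the parametrix stage, and it is precisely where the paper's main technical novelty lies. Your small-norm step (``the difference between the full problem and the model problem is controlled by the $L^2\cap L^\infty$ norm of the jump discrepancies'') presupposes a bounded, invertible \emph{matrix} solution $M(k,j)$ of the model problem satisfying the symmetry $M(-k,j)=\sigma_1 M(k,j)\sigma_1$, so that the error vector $\hat m^{sol}(k,j)M^{-1}(k,j)$ solves a near-identity jump problem. The paper shows that no such bounded solution exists on the lines in $D_j^{sol}$ where $1-(2\kappa_j)^{-1}\gamma_j^2(x,t)=0$ (i.e.\ at the soliton peaks, where $\mathcal S_1(0,j)=\mathcal S_2(0,j)=0$), so one must admit a matrix solution with a simple pole at $k=0$, built from the soliton solution $\mathcal S(k,j)$ and an auxiliary antisymmetric vector $\mathcal V(k,j)$ with a pole at the origin. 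One must then verify that the error vector has only removable singularities at $k=0$ and $\pm\I\kappa_j$; the removability at $k=0$ hinges on the structural fact that $\hat m^{sol}_1(k,j)-\hat m^{sol}_2(k,j)=O(k^2)$ there, which in turn forces the jump matrix to be $\id+O(k^2)$ near the origin --- this is why the paper builds the factor $k^2/(k-\I\tau)^6$ into the approximant $\mathcal G$. Without this construction the error analysis does not close on all of $D_j^{sol}$, only away from the peak lines; your sketch as written would fail exactly there.
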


The proof is based, among other things, on a new matrix solution of the underlying model problem and will be given in Subsections~\ref{sub2.2}, \ref{sub2.3} and Section~\ref{sec:mpaa}.

Our restrictions \eqref{mn} on the regularity assumptions \eqref{low} on the initial datum were made such that one can guarantee 
the existence of a unique classical solution $q(x,t)$ of \eqref{kdv} remaining within the realm of classical scattering theory, i.e.\ such
that \eqref{first} below holds for all times, as established in \cite{EMT1}. However, \eqref{low}, \eqref{mn} is
not necessary for such a solution to exist and if existence of a classical solution satisfying \eqref{first} is known by other means
(see e.g.\ \cite{GR} for results in this direction), the minimal estimates used to prove Theorem~\ref{mainint} imply the following:

\begin{corollary}[Largest possible class]\label{inter}
Assume that for the initial nonresonant data $q(x)$ satisfying \eqref{low} with $m_0=4$ and $n_0=5$, a unique classical solution of \eqref{kdv}--\eqref{low} exists and satisfies 
\be\label{first}
\int_0^{+\infty}|x|(|q(x,t)| + |q(-x,t)+c^2|)dx<\infty, \qquad\forall t\in\R.
\ee
Then  the following asymptotic is valid for $t\to\infty$ uniformly in the domain $ \  x\geq 4c^2 t$:
 \[q(x,t)=q^{sol}(x,t) +O(t^{-1}).\]
 \end{corollary}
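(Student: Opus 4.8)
The plan is to re-examine the proof of Theorem~\ref{mainint} (carried out in Subsections~\ref{sub2.2}, \ref{sub2.3} and Section~\ref{sec:mpaa}) and to separate the two roles played there by the hypotheses \eqref{low}--\eqref{mn}. On the one hand, \eqref{mn} is invoked --- only through \cite{EMT1} --- to secure a unique classical solution $q(x,t)$ of \eqref{kdv} that stays within classical scattering theory for all $t$, i.e.\ so that \eqref{first} holds, the reflection coefficient $r(k,t)$, the discrete spectrum $-\kappa_N^2<\dots<-\kappa_1^2$ and the norming constants $\gamma_j(t)$ are well defined and evolve in the usual way, and the RHP of Subsection~\ref{sub2.2} genuinely encodes $q(x,t)$. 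Under the hypotheses of the corollary this whole input is simply postulated, so here \eqref{mn} --- in particular the requirement $n_0\ge m_0+3$ --- can be discarded. On the other hand, the actual Riemann--Hilbert steps --- conjugating off the poles lying outside the soliton sector, the lens deformation, replacing the resulting jump by the model problem of Section~\ref{sec:mpaa}, and the small-norm estimate for the error matrix --- consume only a bounded amount of decay and smoothness of the scattering data. The first task is therefore to check that every such step goes through using only the decay from \eqref{low} with $m_0=4$ (which fixes the low-order regularity of $r(k)$ at the spectral edge $k=0$ and its decay at infinity) and the smoothness from $n_0=5$ (which supplies exactly the derivatives of $r(k)$ entering the stationary-phase / contour bounds).

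Granting this, the corollary is the specialization of Theorem~\ref{mainint} to $m_0=4$ and $\beta=0$: the nonresonance and the nonempty discrete spectrum are the remaining hypotheses of the theorem, \eqref{first} takes over the role of \eqref{low}--\eqref{mn} in furnishing valid time-evolved scattering data, and the refined RHP analysis then gives
\[
q(x,t)=q^{sol}(x,t)+O\big(t^{-\nu}\big),\qquad \nu=\min\{m_0-3,\beta+1\}=\min\{1,1\}=1,
\]
uniformly on $\{x\ge 4c^2 t+\tfrac{\beta}{c}\log t\}=\{x\ge 4c^2 t\}$, which is the assertion.

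The hard part will be the verification flagged above, namely that no part of the lens deformation or of the construction and matching of the new matrix model solution in Section~\ref{sec:mpaa} silently uses more than $m_0=4$ powers of decay or more than $n_0=5$ derivatives of $q$. Two spots deserve particular care. First, the regularity of $r(k)$ inherited from \eqref{low} near $k=0$ and near $k=\infty$ must be shown to be just enough to render the extra jumps on the deformed contour exponentially small, with the resulting error contributing only $O(t^{-1})$ through the small-norm estimate. Second, and more delicately, at $\beta=0$ the model-problem approximation error is already of the borderline order $O(t^{-1})$, so there is no slack: each estimate must be carried out with the sharp exponent rather than ``up to $\varepsilon$'', and the reconstruction formula recovering $q(x,t)$ from the error matrix must not lose a power of $t$. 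Once these points are settled the corollary follows with no further computation.
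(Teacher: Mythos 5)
Your proposal matches the paper's own (implicit) argument: the hypotheses \eqref{mn} enter only through the existence theory of \cite{EMT1}, and once a classical solution satisfying \eqref{first} is postulated, the estimates of Subsections~\ref{sub2.2}--\ref{sub2.3} and Section~\ref{sec:mpaa} are already carried out for general $m_0,n_0$ and consume exactly $m_0=4$, $n_0=5$ (e.g.\ $\mathcal G^{(l)}(k)=O(k^{-n_0+3})$ needs $n_0\ge 5$ for integrability, and $|\mathcal R_r|\lesssim t^{-(m_0-3)}$ gives the $t^{-1}$ rate), so the conclusion is the specialization $\nu=\min\{m_0-3,\beta+1\}=1$ at $\beta=0$. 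One minor slip: in the paper's conventions the moment count $m_0$ controls the smoothness of $R$ (it is $C^{m_0-1}$) while the derivative count $n_0$ controls its decay $O(k^{-n_0-1})$ at infinity --- you have these roles roughly interchanged, but this does not affect the structure of the argument.
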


\section{From the initial RHP to the pre-model RHP}

\subsection{Statement of the initial RH problem}

Let $q(x)$ be as in Theorem~\ref{mainint} and let $q(x,t)$ be the solution of \eqref{kdv}--\eqref{mn}. Condition \eqref{mn} implies (cf.\ \cite{EMT1}) that this solution exists, is unique and satisfies \eqref{first}.
Let $\phi(k,x,t)$ be the right Jost solution of the associated Schr\"{o}dinger equation
\[
L(t)y=-\frac{d^2}{dx^2} y + q(x,t) y=k^2 y,
\]
satisfying 
\be\label{limsi}
\lim_{x \to  +\infty} \E^{- \I kx} \phi(k,x,t) =1,
\ee
and let $\phi_1(k,x,t)$ be the corresponding Jost solution associated with the left background,
\be\label{lims}
 \lim_{x \to  -\infty} \E^{ \I k_1 x} \phi_1(k,x,t) =1, \qquad k_1:=\sqrt{k^2 +c^2}.
\ee
Here $k_1>0$ for $k\in[0,\I c)_r$. The subscript "$r$" in the last notation indicates the right side of the cut along the interval $[0,\I c]$. Note that the function $\phi(k,x,t)$ is a holomorphic function of $k$ in $\C^+:=\{ k \in \C \colon \im k > 0\}$ and continuous up to the real axis. It  is real-valued for $k\in [0,\I c]$, and does not have a discontinuity on this interval. The function $\phi_1(k,x,t)$ is holomorphic in the domain $\C^+\setminus (0, \I c]$ and continuous up to the boundary. On the different sides of $[0,\I c]$ it takes complex conjugated values.
Denote the Wronskian of the Jost solutions by 
\[W(k)= \phi_1(k,x,0)\phi^\prime(k,x,0) -\phi_1^\prime(k,x,0)\phi(k,x,0),\] 
where $f^\prime=\frac{\pa}{\pa x} f$. 
The conditions of Theorem~\ref{mainint} exclude a possible resonance at the point $\I c$, that is, we assume the condition 
\[
W(\I c)\neq 0.
\]
On $[0,\I c]$ introduce the function
\be\label{defchi1}
\chi(k):=\frac{4 k \left[k_1\right]_r}{| W(k)|^2}.
\ee
One can verify that $\chi(k)=\I|\chi(k)|$, and $\chi(\I c)=\chi(0)=0$.

Let $R(k)$ be the right reflection coefficient of the initial data $q(x)$ and let
\[
\gamma_j:=\|\phi(\I\kappa_j,\cdot,0)\|^{-2}_{L^2(\R)}
\]
be the  right norming constants for $j=1, \dots, N$. The set
\be\label{scat5}
\{ R(k), k\in \R; \quad |\chi(k)|, k\in[0, \I c];\quad \I\kappa_j, \gamma_j, j=1, \dots,N\},
\ee
constitutes the minimal set of scattering data to uniquely reconstruct the solution of the initial value problem \eqref{kdv}--\eqref{mn} (cf.\ \cite{EGLT, EMT1}). 

The Jost  solutions \eqref{lims} and \eqref{limsi} are connected by the  scattering relation
\[
T(k,t) \phi_1(k,x,t) =  \ol{\phi(k,x,t)} +
R(k,t) \phi(k,x,t),  \quad k \in \R,
\]
where $T(k,t)$ and $R(k,t)$  are the right  transmission and reflection coefficients. We will use the notation $T(k) = T(k,0)$ and
$R(k)=R(k,0)$. 

We define a vector-valued function $m(k,x,t) = (m_1(k,x,t), m_2(k,x,t))$, meromorphic with respect to the spectral parameter $k \in \C \setminus(\R \cup [-\I c, \I c])$ for fixed $x,t$, as follows:
\be\label{defm}
m(k,x,t)= \left\{\begin{array}{cl}
\begin{pmatrix} T(k,t) \phi_1(k,x,t) \E^{\I kx},  & \phi(k,x,t) \E^{-\I kx} \end{pmatrix},
& k\in \C^+\setminus (0,\I c], \\
m(-k,x,t)\sigma_1,
& k\in\C^-\setminus [-\I c, 0),
\end{array}\right.
\ee
where $\sigma_1=(\begin{smallmatrix} 0&1\\1&0\end{smallmatrix})$ is the first Pauli matrix. The vector function $m(k,x,t)$ has at most simple poles at the points $\pm \I \kappa_j$. 
For $k\to \infty$, the following asymptotic formula holds
\be\label{main1}
q(x,t)=\lim_{k\to\infty} 2 k^2 \big(m_1(k,x,t)m_2(k,x,t) -1\big),
\ee 
which we will use to extract our asymptotics.

Let $\varepsilon>0$ and $\delta>0$ be two arbitrary small parameters. We divide the domain 
\[\mathcal D:= \left\{(x,t): x\geq 4c^2 t + \frac{\beta}{c}\log t, \quad t\gg T_0\gg 1, \quad \beta\geq 0\right\}
\] 
into a union of the following regions
\[\aligned
D_N &=\{(x,t)\in\mathcal D:\  x\geq (4\kappa_N^2 +\varepsilon)t\}; \\ 
D_N^{sol}&= \{(x,t)\in\mathcal D: (4\kappa_N^2 -\varepsilon)t\leq 
 x\leq (4\kappa_N^2 +\varepsilon)t\},\\
D_j&=\{(x,t)\in\mathcal D: (4\kappa_{j}^2 -\varepsilon)t \geq x\geq (4\kappa_{j-1}^2 +\varepsilon)t\},\\
D_j^{sol}&= \{ (x,t)\in\mathcal D: (4\kappa_j^2 -\varepsilon)t\leq 
 x\leq (4\kappa_j^2 +\varepsilon)t\}, \quad j=1, \dots, N-1,\\
 D_0&=\{ (x,t): 4c^2 t +\tfrac{\beta}{c}\log t\leq x\leq (4\kappa_1^2 -\varepsilon)t\}.
 \endaligned
 \]
 Denote the small nonintersecting circles around the points of the discrete spectrum by
  \[\mathbb D_j:=\{k: |k-\I\kappa_j|<\delta\}, \quad
\mathbb T_j:= \partial \mathbb D_j =\{k:|k-\I\kappa_j|=\delta\},\quad j=1, \dots ,N,\]
 with counterclockwise oriented boundaries (compare Fig.~\ref{fig:1}).
Let $\mathbb T_j^*=\{ k: -k\in \mathbb T_j\}$ be small circumferences around the points $-\I \kappa_j$,
again with counterclockwise orientation.

Introduce the functions
\be\label{Blaschke}P_j(k):=\prod_{l=j}^N \frac{k+\I\kappa_j}{k-\I\kappa_j},\quad k\in\C^+, \quad j=1, \dots, N;\quad  P_{N+1}(k)=1,\ee
and the matrices
\be\label{vspom}A_j(k)=\begin{pmatrix} 1 & 0 \\
-\frac{\I \gamma_j^2 \E^{t\Phi(\I \kappa_j)} }{k- \I \kappa_j} & 1 \end{pmatrix},\quad B_j(k)=\begin{pmatrix} 1& -\frac{k-\I\kappa_j}{\I\gamma_j^2\E^{2t\Phi(\I\kappa_j)}}\\0&1\end{pmatrix},\quad  j=1, \dots, N,\ee
where $A_j(k)=A_j(k,x,t)$, $B_j(k)=B_j(k,x,t)$. The {\em phase function $\Phi(k)=\Phi(k,x,t)$} is defined by  
\[
\Phi(k)= 4 \I k^3+\I k \frac {x}{t}, \quad k\in\C.
\]
In the domain $(k, x, t)\in \C^+\times D_j$,  $j=1, \dots, N-1$, we redefine $m(k)$ given by \eqref{defm} as
\be \label{defmj} m(k,j)= m(k,x,t,j)=\begin{cases}\begin{array}{ll} 
m(k) A_l(k)[P_{j+1}(k)]^{-\sigma_3}, &\quad  k\in \mathbb D_l, \qquad 1\leq l\leq j,\\
m(k)B_l(k)[P_{j+1}(k)]^{-\sigma_3}, &\quad  k\in \mathbb D_l, \qquad N\geq l>j,\\
m(k)[P_{j+1}(k)]^{-\sigma_3},&\quad  k\in (\C^+\setminus (0, \I c])\setminus \cup_{l=1}^N \ol{\mathbb D_l}.  
\end{array}\end{cases}\ee
For $(k,x,t)\in \C^+\times D_N$ we set
\be\label{defmN}m(k,x,t,N)=\begin{cases}\begin{array}{lll} m(k) A_l(k), & k\in \mathbb D_l, &l\leq N,\\
m(k),& k\in (\C^+\setminus (0, \I c])\setminus \cup_{l=1}^N \ol{\mathbb D_l}. & \end{array}\end{cases}\ee
In the domain $(k, x, t)\in \C^+\times D_j^{sol}$, $j=1, \dots, N$, we set
\be \label{defmjsol} m^{sol}(k,j)= m^{sol}(k,x,t,j)=\begin{cases}\begin{array}{ll} 
m(k) A_l(k)[P_{j+1}(k)]^{-\sigma_3}, & \quad k\in \mathbb D_l, \qquad 1\leq l< j,\\
m(k)B_l(k)[P_{j+1}(k)]^{-\sigma_3}, & \quad k\in \mathbb D_l, \qquad N \geq  l>j,\\
m(k)[P_{j+1}(k)]^{-\sigma_3},& \quad k\in (\C^+\setminus (0, \I c])\setminus \cup_{l=1}^N \ol{\mathbb D_l}. 
\end{array}\end{cases}\ee
Redefine $m(k,j)=m(-k,j)\sigma_1$ and $m^{sol}(k,j)=m^{sol}(-k,j)\sigma_1$ for $k\in \C^-.$

We next introduce the jump contour 
\be\label{defcont}\Sigma= \R_+\cup\R_-\cup [\I c,0]\cup [ -\I c,0]\cup_{l=1}^N (\mathbb T_l \cup \mathbb T_l^*)
\ee 
as depicted in Fig.~\ref{fig:1} with the following orientation: left-to-right on $\R_+$, right-to-left on $\R_-=\R_+^*$, 
top-down on $[\I c, 0]$,  bottom-top on $[-\I c, 0]=[\I c, 0]^*$, and counterclockwise on $\mathbb T_j$ and $\mathbb T_j^*$. 
By $I^*$ we refer to the contour consisting of the points $-k: k\in I$ with the following orientation: if $k$ moves 
in the positive direction of $I$, then $-k$ moves in the positive direction of $I^*$.

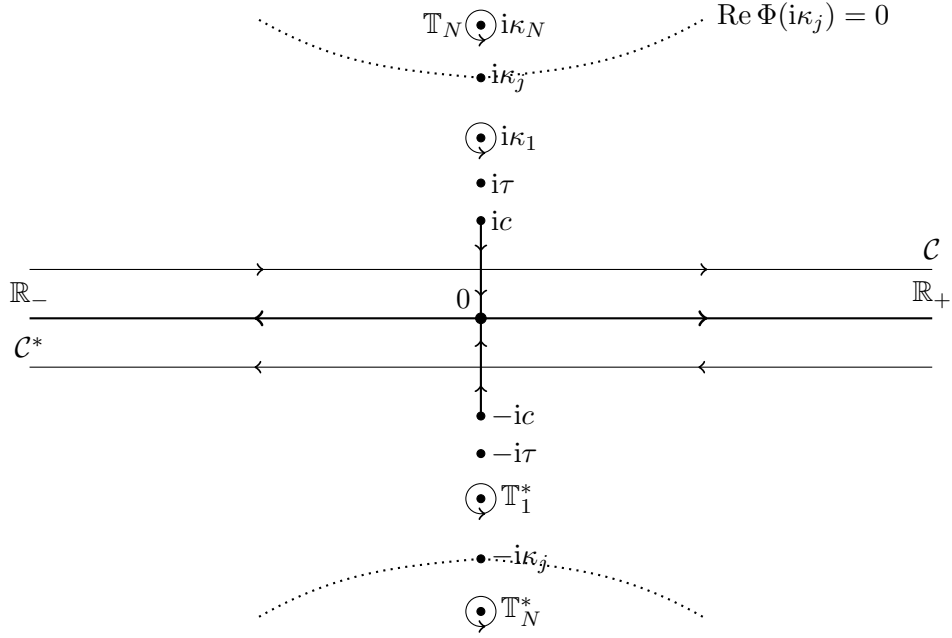
\begin{figure}[ht] 
\begin{tikzpicture} 

\draw[thick] (-6,0) -- (6,0) node[above] {$\R_+$};
\node[above] at (-6,0) {$\R_-$};
\draw[] (-6,0.65) -- (6,0.65) node[above] {$\mathcal C$};
\draw[] (-6,-0.65) -- (6,-0.65);
\node[above] at (-6,-0.65) {$\mathcal C^*$};
\draw[thick] (0,-1.3) -- (0,1.3);
\filldraw (0,0) circle (2pt) node[above left]{${0}$};

\draw [->, very thick] (3,0) -- (3.01,0);
\draw [->, very thick] (-3,0) -- (-3.01,0);
\draw [->, thick] (3,0.65) -- (3.01,0.65);
\draw [->, thick] (-2.9,0.65) -- (-2.89,0.65);
\draw [->, thick] (2.9,-0.65) -- (2.89,-0.65);
\draw [->, thick] (-3,-0.65) -- (-3.01,-0.65);

\draw [->, thick] (0.05,2.2) -- (0.06,2.2);
\draw [->, thick] (0.05,3.7) -- (0.06,3.7);
\draw [->, thick] (0.05,-2.6) -- (0.06,-2.6);
\draw [->, thick] (0.05,-4.1) -- (0.06,-4.1);

\draw [->, thick] (0,0.3) -- (0,0.29);
\draw [->, thick] (0,0.9) -- (0,0.89);
\draw [->, thick] (0,-0.3) -- (0,-0.29);
\draw [->, thick] (0,-0.9) -- (0,-0.89);

\filldraw (0,1.3) circle (1.5pt) node[right]{$\I c$};
\filldraw (0,-1.3) circle (1.5pt) node[right]{$- \I c$};
\filldraw (0,1.8) circle (1.5pt) node[right]{$\I \tau$};
\filldraw (0,-1.8) circle (1.5pt) node[right]{$-\I \tau$};
\filldraw (0,2.4) circle (1.5pt) node[right]{$\ \I \kappa_1$};
\filldraw (0,-2.4) circle (1.5pt) node[right]{$\ \mathbb T_1^*$};
\filldraw (0,3.2) circle (1.5pt) node[right]{$\I \kappa_j$};
\filldraw (0,-3.2) circle (1.5pt) node[right]{$-\I \kappa_j$};
\filldraw (0,3.9) circle (1.5pt) node[right]{$\ \I \kappa_N$};
\filldraw (0,-3.9) circle (1.5pt) node[right]{$\ \mathbb T_N^*$};

\draw[out=5, in=210, thick, dotted] (0,3.2) to (3,4) node[right]{$\re \Phi(\I\kappa_j)=0$};
\draw[out=175, in=330, thick, dotted] (0,3.2) to (-3,4);
\draw[out=355, in=150, thick, dotted] (0,-3.2) to (3,-4);
\draw[out=185, in=30, thick, dotted] (0,-3.2) to (-3,-4);

\draw (0,3.9) circle (0.2cm);
\draw (0,-3.9) circle (0.2cm);
\node at (-0.5,3.9) {$\mathbb T_N$}; 
\draw (0,2.4) circle (0.2cm);
\draw (0,-2.4) circle (0.2cm);

\end{tikzpicture}
\caption{Part of the jump contour $\Sigma$.} \label{fig:1}
\end{figure}

We observe that $m(k,j)$ is a piecewise holomorphic vector function with jumps on $\Sigma$ and $m^{sol}(k,j)$ is a piecewise meromorphic function with simple  poles at $\I \kappa_j$ and $-\I\kappa_j$ and with the same jumps as $m(k,j)$ except at  $\mathbb T_j$ and $\mathbb T_j^*$, where it does not have jumps.
Note that \be\label{1}m_1(k,x,t,j)m_2(k,x,t,j)=m_1(k,x,t)m_2(k,x,t),\quad k\to\infty, \quad (x,t)\in D_j,
\ee 
and
\be\label{2}m_1^{sol}(k,x,t,j)m_2^{sol}(k,x,t,j)=m_1(k,x,t)m_2(k,x,t),\quad k\to\infty, \quad (x,t)\in D_j^{sol}.\ee 
\begin{theorem}\label{thm:vecrhp} 
Let \eqref{scat5} be the right scattering data of the initial datum $q(x)$.
Assume that $x$ and $t$ are arbitrary large fixed  values such that $(x,t)\in D_j$ (resp.\ $(x,t)\in D_j^{sol}$).
 Then the vector function $m(k,j)=m(k,x,t,j)$ (respectively  $m^{sol}(k,j)=m^{sol}(k,x,t,j)$) defined in \eqref{defmj}, \eqref{defmN} (resp.\ \eqref{defmjsol})
is the unique solution of the following vector Riemann--Hilbert problem:

\noindent Find a vector-valued function $m(k,j)$, holomorphic (resp.\ $m^{sol}(k,j)$, meromorphic) away from $\Sigma$ (resp.\ away from $\Sigma\setminus (\mathbb T_j\cup \mathbb T_j^*)$),  satisfying:
\begin{enumerate}
\item The jump condition:

 $m_+(k,j)=m_-(k,j) v(k,j)$ (resp.\ $m_+^{sol}(k,j)=m_-^{sol}(k,j) v^{sol}(k,j)$), where
\[ 
v(k,j)=\left\{\begin{array}{lr}\begin{pmatrix}
1-|R(k)|^2 & - \ol{R(k)}P_{j+1}^{2}(k) \E^{-2t\Phi(k)} \\
P_{j+1}^{-2}(k)R(k) \E^{2t\Phi(k)} & 1
\end{pmatrix},& k\in\R_+,\\[4mm]
\begin{pmatrix}
1 & 0 \\
\chi(k)P^{-2}_{j+1}(k) \E^{2t\Phi(k)} & 1
\end{pmatrix},& k\in [\I c, 0],\\[4mm]
 A_l(k)[P_{j+1}(k)]^{-\sigma_3} & k\in \mathbb T_l, \ \ l\leq j;\\[2mm]
 B_l(k)[P_{j+1}(k)]^{-\sigma_3} & k\in \mathbb T_l, \ \ l>j;\\[2mm]
  \sigma_1 v(-k)\sigma_1,& k\in \R_-\cup[ -\I c,0]\cup_{l=1}^N \mathbb T_l^*;
\end{array}\right.
\]
(resp.\ 
$ 
v^{sol}(k,j)=v(k,j),\ k\in \Sigma\setminus(\mathbb T_j\cup \mathbb T_j^*);\quad 
v^{sol}(k,j)=\id,\ k\in \mathbb T_j\cup \mathbb T_j^*.)$
  \item
 the pole conditions:
\[
\aligned 
\res_{\I\kappa_j} m^{sol}(k,j) &= \lim_{k\to\I\kappa_j} m^{sol}(k,j)
\begin{pmatrix} 0 & 0\\ \I \ga_j^2 P_{j+1}^{-2}(\I \kappa_j) \E^{2t\Phi(\I \kappa_j)}  & 0 \end{pmatrix},\\
\res_{-\I\kappa_j} m^{sol}(k,j) &= \lim_{k\to -\I\kappa_j} m^{sol}(k,j)
\begin{pmatrix} 0 & - \I \ga_j^2 P_{j+1}^{2}(\I \kappa_j) \E^{2t\Phi(\I \kappa_j)} \\ 0 & 0 \end{pmatrix},
\endaligned
\]
\item
the symmetry conditions:
\[ 
m(-k,j) = m(k,j) \sigma_1,\quad  \ k\in \C\setminus\Sigma,
\]
(resp.\
$
m^{sol}(-k,j) = m^{sol}(k,j) \sigma_1,\ \ k\in \C\setminus(\Sigma\setminus (\mathbb T_j\cup\mathbb T_j^*)).)
$
\item
 the normalization condition
\[ 
\lim_{\kappa\to\infty} m(\I\kappa,j) = \lim_{\kappa\to\infty} m^{sol}(\I\kappa,j) = (1\ \ 1).
\]
\item The function $m(k,j)$ (resp.\ $m^{sol}(k,j)$)  has continuous limits as $k$ approaches  $\Sigma$. \end{enumerate}
\end{theorem}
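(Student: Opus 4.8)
The plan is to verify each of the five properties of the vector RH problem directly from the definitions \eqref{defm}, \eqref{defmj}, \eqref{defmN}, \eqref{defmjsol}, and then separately establish uniqueness. First I would record the well-known analytic structure of the Jost solutions and scattering coefficients: $\phi(k,x,t)$ is holomorphic in $\C^+$ and continuous up to $\R\cup[0,\I c]$, $T(k,t)\phi_1(k,x,t)$ has meromorphic continuation into $\C^+$ with simple poles exactly at $\I\kappa_l$ (residues expressed through $\gamma_l$), and the scattering relation on $\R$ together with the analogous relation on $[0,\I c]$ (which produces the factor $\chi(k)$ from \eqref{defchi1}) encodes the jumps of the unconjugated $m(k,x,t)$ of \eqref{defm}. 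The symmetry $m(-k,j)=m(k,j)\sigma_1$ is built into the definition on $\C^-$, so property (3) is immediate; one only has to check it is consistent on the real axis, which follows from $\ol{\phi(k)}=\phi(-k)$ for $k\in\R$.

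Next I would compute the jump matrix $v(k,j)$ region by region. On $\R_\pm$ and on $[\pm\I c,0]$ the jump of the bare $m(k,x,t)$ is the standard one (with $P_{j+1}\equiv 1$), and conjugating by the analytic, jump-free factor $[P_{j+1}(k)]^{-\sigma_3}$ simply dresses the off-diagonal entries with $P_{j+1}^{\pm2}$, giving exactly the stated $v(k,j)$; the $\sigma_1 v(-k)\sigma_1$ form on the starred contours is then forced by the symmetry. On the circles $\mathbb T_l$ and $\mathbb T_l^*$ the jump is precisely $A_l(k)[P_{j+1}]^{-\sigma_3}$ for $l\le j$ and $B_l(k)[P_{j+1}]^{-\sigma_3}$ for $l>j$, by construction of \eqref{defmj}. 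The key point here — and a standard computation — is the equivalence between the pole description of $m^{sol}$ and the jump-on-$\mathbb T_l$ description of $m$: inside $\mathbb D_l$ with $l\le j$ one writes $m(k)=m^{sol}(k)A_l(k)^{-1}$ and checks that multiplying by $A_l(k)^{-1}$ exactly cancels the simple pole of $m^{sol}$ at $\I\kappa_l$, using the residue relation; for $l>j$ the same is done with $B_l$ via the second symmetry $m_1(k)=m_2(-k)$. This yields properties (1) and (2). Property (4) follows from \eqref{limsi}, \eqref{lims} and $T(k,t)\to1$, $P_{j+1}(\infty)=1$ as $k\to\infty$, and property (5) (continuous boundary values) is inherited from the corresponding smoothness of $\phi$, $\phi_1$, $R$, $\chi$ under the hypotheses on $q(x)$, noting $\chi(0)=\chi(\I c)=0$ removes any problem at the endpoints.

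The genuinely nontrivial part is \emph{uniqueness} of the vector solution. The standard route — which I would follow — is: (i) show any solution $\tilde m$ of the RH problem has determinant-type symmetry forcing it to be $\C$-Schwarz-reflection-symmetric; (ii) pass from the vector problem to a $2\times2$ matrix problem by the usual trick, or more directly use a Liouville/vanishing-lemma argument. Concretely, if $\tilde m$ and $\hat m$ are two solutions, consider $\tilde m - \hat m$, or better form the scalar function built from components of $\tilde m$ and the known $m(k,j)$; using that the jump matrices on $\R$ and $[0,\I c]$ have the "correct" structure (the $(1,1)$-entry on $\R_+$ is $1-|R|^2\le1$, the off-diagonal on $[0,\I c]$ is $\chi=\I|\chi|$, etc.), one applies the symmetry $k\mapsto -k$ and a contour-deformation/positivity argument à la Deift--Zhou to conclude the difference vanishes identically. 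The pole conditions must be handled carefully: the residue relations are exactly what makes the local matrices $A_l,B_l$ "triangular with the right sign" so that the associated scalar quadratic form is sign-definite. The main obstacle I anticipate is precisely making this vanishing argument airtight in the \emph{vector} (not matrix) setting and in the presence of the interval $[0,\I c]$ where $m$ takes complex-conjugate values across the cut; the clean way is to cite the general uniqueness results for exactly this class of KdV-type vector RH problems from \cite{EGKT, EMT1} (or \cite{EGLT}), after checking that the conjugation by $[P_{j+1}]^{-\sigma_3}$ and the insertion of the circles $\mathbb T_l$ do not affect the hypotheses of those statements — indeed $m(k,j)$ and $m^{sol}(k,j)$ differ from the previously studied objects only by the explicit, invertible, bounded transformations $A_l,B_l,[P_{j+1}]^{-\sigma_3}$, so existence and uniqueness transfer verbatim.

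Finally I would note that \eqref{1} and \eqref{2} are immediate from $P_{j+1}(\infty)=1$ and the fact that $A_l(k),B_l(k)\to\id$ as $k\to\infty$, so the product $m_1m_2$ is unchanged in the limit and the reconstruction formula \eqref{main1} is preserved under all these transformations — which is what makes the whole deformation useful for extracting $q(x,t)$ in the subsequent sections.
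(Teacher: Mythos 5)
Your proposal is correct and follows essentially the same route as the paper: the paper gives no proof at all, stating only in the remark following the theorem that these results are slight modifications of those in \cite{EGKT, EPT}, and your direct verification of the jump, pole, symmetry and normalization conditions combined with the deferral of the uniqueness (vanishing-lemma) argument to the earlier vector-RHP results is exactly the content being invoked there. The additional detail you supply --- the cancellation of the poles of $m(k)$ by $A_l$, $B_l$ and $[P_{j+1}(k)]^{-\sigma_3}$ via the residue relations, and the preservation of $m_1m_2$ at infinity --- is consistent with and fills in what the paper leaves to the references.
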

\begin{remark} The results listed in this theorem  are slight modifications of the results obtained in \cite{EGKT, EPT}, and we omit the proof.
\end{remark} 

\subsection{Properties of the scattering data and their analytic continuations}\label{sub2.2}

First assume that the initial datum $q(x)$ is smooth and satisfies \eqref{exp}. Then the reflection coefficient $R(k)$ 
has an analytic continuation to $\mathcal O_\rho^- \cup \mathcal O_\rho^+$, where
$\mathcal O_\rho^\pm=\{k: \pm\re k>0,  0<\im k<\rho\}$.

In contrast to the case of fast decaying initial datum, in the steplike case the analytic continuation of $R(k)$ has a jump along $[\I c, 0]\cap \{0<\im k< \rho\}$, given by (cf.\ \cite[Lemma 3.2]{EGKT})
\be\label{anal1} R_-(k)-R_+(k) + \chi(k)=0,\quad  k\in [\I c, 0]\cap \{0<\im k< \rho\}.\ee
On the other hand, if the initial datum satisfies \eqref{low},  which is the case we consider, $R(k)$ and $\chi(k)$ are  $m_0-1$  times continuously differentiable except for the node point $k=0$, and 
the following formula is valid (\cite[Section~3]{EMT1})
\[
 (\I)^{l+1} R^{(l)}(+0) + (-\I)^{l+1} \ol{R^{(l)}(+0)}=\lim_{h\to +0}\frac{d^l}{dh^l}\chi(\I h),
 \] 
where\[ \quad R^{(l)}(+0)=\lim_{k\to +0} \frac{d^l}{dk^l}R(k), \quad l=0,1, \dots, m_0-1.
\]
Respectively,
\be\label{anal3}
\sum_{l=0}^{m_0-1} \frac{R^{(l)}(+0)}{l!} k^l- \sum_{l=0}^{m_0-1} \frac{\ol{R^{(l)}(+0)}}{l!} (-k)^l= \sum_{l=0}^{m_0-1} \frac{\chi^{(l)}(0)}{l!} k^l, \quad \mbox{for $k=\I h$}.
\ee
Thus, this formula agrees with \eqref{anal1} for the case \eqref{low}. In fact, the decomposition of $\chi$ at $0$ has 
only odd degrees of $k$ and 
\be\label{sootn}R^{(l)}(+0)=(-1)^{l} \ol{R^{(l)}(+0)}.
\ee

Set 
$\tau=\frac{c+\kappa_1}{2}.
$ In the domain $\mathcal O^+$, where \be\label{mathcalO}\mathcal O^\pm:=\{k: \pm\re k>0,\quad 0<\im k<\frac c 2\},\ee introduce a rational function
\[ 
q(k)=\sum_{s=n_0+1}^{n_0+m_0}\frac{a_s}{(k-\I\tau)^s},
\] where the coefficients $a_s\in\C$ can be uniquely defined to satisfy
\be\label{anal4}\sum_{l=0}^{m_0-1} \frac{R^{(l)}(0)}{l!} k^l= \sum_{l=0}^{m_0-1} \frac{q^{(l)}(+0)}{l!} k^l.\ee
An elementary analysis of the algebraic system of equations for $a_s$ together with \eqref{sootn} implies
\[\aligned & a_{n_0+ 2s}\in \R, \  \ a_{n_0+2s+1}\in \I \R, \quad \mbox{for}\ n_0\ \mbox{odd},\\
& a_{n_0+ 2s}\in \I\R, \ \  a_{n_0+2s+1}\in  \R, \quad \mbox{for}\ n_0\ \mbox{even},\endaligned\]
and therefore
\[
\ol{q(-k)}=q(k),\quad k\in\R.\]
The same property is valid for the reflection coefficient, $R(-k)=\ol{R(k)}$, $k\in \R$.

We showed that the function
\[\aligned
p(k)&=(-1)^{n_0+1}\left( \frac{a_{n_0+1}}{(k+\I\tau)^{n_0+1}}-\frac{a_{n_0+2}}{(k+\I\tau)^{n_0+2}} 
+ \frac{a_{n_0+3}}{(k+\I\tau)^{n_0+3}} \right. \\
&\qquad \left. + \dots + (-1)^{m_0-1} \frac{a_{n_0+m_0}}{(k+\I\tau)^{n_0+m_0}}\right)
\endaligned
\]
satisfies \be\label{imp77}p(k)=\ol{q(k)}=q(-k),\quad k\in \R,\ee and is the Tailor decomposition for $R(k)$ as $k\to -0$. 
On the other hand, it is analytic in $\mathcal O^-$ (cf.\ \eqref{mathcalO}).
Therefore we have
\[
R(k)- q(k)=O(k^{m_0-1}),\quad \mbox{as $k\to +0$};\quad R(k)- p(k)=O(k^{m_0-1}),\quad  \mbox{as $k\to -0$};
\]
and from \eqref{anal3} and \eqref{anal4} it follows that
\[
p(\I h-0) - q(\I h +0)+\chi(\I h)= O(h^{m_0-1}), \quad \mbox{as $h\to +0$}.
\]
By \cite[Theorem 4.1]{EGLT},
\[
\frac{d^l}{d k^l} R(k)=O\left(\frac{1}{k^{n_0+1}}\right), \quad k\to \pm\infty,\quad l=0,1, \dots, m_0-1.
\]
Therefore,
\[
\aligned &\frac{d^l}{d k^l} (R(k)-q(k))=O\left(\frac{1}{k^{n_0+1}}\right), \quad k\to +\infty,\quad l=0,1, \dots, m_0-1.\\
&\frac{d^l}{d k^l} (R(k)-p(k))=O\left(\frac{1}{k^{n_0+1}}\right), \quad k\to -\infty,\quad l=0,1, \dots, m_0-1.
\endaligned\]
Introduce the function 
\[
\mathcal R(k)=\begin{cases} \begin{array}{ll} R(k)-q(k), &k\geq 0,\\ 
R(k)-p(k), & k<0,\end{array}\end{cases}\]
then we proved the following
\begin{lemma}\label{lem1} (1) The function  $\mathcal R(k)$ has the following properties
\begin{itemize}
\item $\mathcal R\in C^{m_0-1}(\R)$;
\item $\frac{d^l}{d k^l} \mathcal R(k)=O(\frac{1}{k^{n_0+1}})\quad k\to \pm\infty,\quad l=0,1,\dots,m_0-1;$
\item $\frac{d^l}{d k^l} \mathcal R(0)=0,\quad l=0,1,\dots,m_0-1;$
\end{itemize}
(2) For the function \be\label{chivn}f(k):= \chi(k) +p(k-0) - q(k+0), \quad k\in [\tfrac {\I c}{2}, 0],\ee the estimate is valid:
\be\label{imp99}f(k)=O(k^{m_0-1}), \quad k\to 0.\ee
\end{lemma}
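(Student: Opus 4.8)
Since the ingredients have been prepared in the discussion preceding the lemma, the proof amounts to assembling them; the plan is to verify the three bullets of part~(1) in turn and then read off part~(2) from the node identity \eqref{anal3}.

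For part~(1), smoothness of $\mathcal R$ on $\R\setminus\{0\}$ is immediate, since $R\in C^{m_0-1}(\R\setminus\{0\})$ by the scattering theory under \eqref{low} (cf.\ \cite{EMT1,EGLT}), while $q$ and $p$ are rational with poles only at $\pm\I\tau\notin\R$. At the node, $q$ was constructed in \eqref{anal4} so that its $(m_0-1)$-jet at $k=+0$ coincides with that of $R$, and $p$, which satisfies $p(k)=\overline{q(k)}=q(-k)$ on $\R$, was identified as the $(m_0-1)$-jet of $R$ at $k=-0$; since $q$ and $p$ are analytic at $0$, it follows that the one-sided derivatives of $\mathcal R$ up to order $m_0-1$ exist, extend continuously to $0$, and vanish there, whence $\mathcal R\in C^{m_0-1}(\R)$ with $\mathcal R^{(l)}(0)=0$ for $l\le m_0-1$ (the first and third bullets). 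For the decay bullet, \cite[Theorem~4.1]{EGLT} gives $R^{(l)}(k)=O(k^{-n_0-1})$ as $k\to\pm\infty$ for $l\le m_0-1$, while each summand of $q$ (resp.\ $p$), together with all of its derivatives, is $O(k^{-n_0-1})$ because the lowest pole order occurring is $n_0+1$; subtracting yields $\mathcal R^{(l)}(k)=O(k^{-n_0-1})$.

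For part~(2), setting $k=\I h$ one combines the node identity \eqref{anal3} with the matching conditions \eqref{anal4} for $q$ at $+0$ and for $p$ at $-0$ to obtain $p(\I h-0)-q(\I h+0)+\chi(\I h)=O(h^{m_0-1})$ as $h\to+0$, that is, $f(k)=O(k^{m_0-1})$ as $k\to0$ along $[\tfrac{\I c}{2},0]$; note that on that segment $q$ and $p$ are in fact analytic, the $\pm0$ labels merely recording which rational surrogate is meant.

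The genuinely substantive content lies not in this bookkeeping but in the two facts imported from \cite{EMT1,EGLT}: the precise relation between the $(m_0-1)$-jets of $R$ and of $\chi$ at the node $k=0$ in the step-like setting, whose asymmetry between the limits from $+0$ and from $-0$ is exactly what forces the two distinct rational functions $q$ and $p$, and the large-$k$ derivative bound $R^{(l)}(k)=O(k^{-n_0-1})$ under the low-regularity hypotheses \eqref{low}. I expect the node analysis to be the main obstacle, since it is there that the step-like structure enters and that the choice of surrogates must be made compatible both with the jet conditions and with the later contour deformation; granting it, the lemma follows by the assembly above.
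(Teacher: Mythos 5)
Your proposal is correct and follows the paper's own route: the paper "proves" Lemma~\ref{lem1} precisely by the assembly you describe — smoothness of $R$, $\chi$ away from the node under \eqref{low}, the jet-matching construction of $q$ and $p$ via \eqref{anal4} and \eqref{imp77}, the node identity \eqref{anal3} for part (2), and the decay bound $R^{(l)}(k)=O(k^{-n_0-1})$ from \cite[Theorem 4.1]{EGLT} combined with the elementary decay of the rational surrogates. The substantive inputs you flag (the jet relation between $R$ and $\chi$ at $k=0$ and the large-$k$ derivative bounds) are exactly the ones the paper imports from \cite{EMT1,EGLT}.
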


Set now 
\[
\mathcal G(k)=\frac{(k-\I\tau)^6}{k^2}\mathcal R(k).\]
By Lemma~\ref{lem1} we have
\be\label{mathcalG} \mathcal G\in  C^{m_0-3}(\R);\quad  \frac{d^l}{d k^l} \mathcal G(0)=0; \quad \frac{d^l}{d k^l} \mathcal G(k)=O\bigg(\frac{1}{k^{n_0-3}}\bigg), \quad k\to\infty, \quad l=0,\dots,m_0-3.
\ee 
Evidently, $\mathcal G\in L^1(\R)\cap L^{\infty}(\R)$. Let 
\[\hat {\mathcal G}(x)=\frac{1}{2\pi}\int_\R \mathcal G(k)\E^{\I k x}dk,\]
be the Fourier transform for $\mathcal G(k)$. 
From Lemma \ref{lem1} and \eqref{mathcalG} we get  \begin{corollary}\label{cor1}
The following properties are valid:
\begin{itemize}
\item $\mathcal G(-k)=\ol{\mathcal G(k)};$
\item $\hat {\mathcal G}(x)\in C^{\,n_0-4}(\R);\quad $ $\hat {\mathcal G}(x)\in\R$;
\item $x^{m_0-3}\hat {\mathcal G}(x)\in L^1(\R)$.
\end{itemize}
\end{corollary}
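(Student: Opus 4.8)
All three items describe the Fourier transform $\widehat{\mathcal G}$, and I would read them off from the information already collected in Lemma~\ref{lem1} and \eqref{mathcalG}: $\mathcal G\in L^1(\R)\cap L^\infty(\R)\cap C^{m_0-3}(\R)$, $\mathcal G$ together with its first $m_0-3$ derivatives vanishes at $k=0$, and $\mathcal G^{(l)}(k)=O(|k|^{-(n_0-3)})$ as $|k|\to\infty$ for $0\le l\le m_0-3$; moreover away from $k=0$ the factor $\mathcal R(k)/k^2$ is as smooth as $\mathcal R$, so $\mathcal G\in C^{m_0-1}$ there. The plan is to use the standard Fourier duality (decay $\leftrightarrow$ smoothness and smoothness $\leftrightarrow$ decay), plus the Schwarz symmetry.

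\emph{Symmetry and realness.} Since $\tau\in\R$, for real $k$ one has $\overline{(k-\I\tau)^6/k^2}=((-k)-\I\tau)^6/(-k)^2$, so $\mathcal G(-k)=\overline{\mathcal G(k)}$ reduces to $\mathcal R(-k)=\overline{\mathcal R(k)}$ on $\R$, which I would check separately for $k>0$ and $k<0$ from the relations $R(-k)=\overline{R(k)}$ and $p(k)=\overline{q(k)}=q(-k)$ (equation \eqref{imp77}) established above — recall that $q$ and $p$ were built precisely so that passing to $\mathcal R$ does not destroy the symmetry of $R$. Conjugating $\widehat{\mathcal G}(x)=\tfrac1{2\pi}\int_\R\mathcal G(k)\E^{\I kx}\,dk$ and substituting $k\mapsto-k$ then gives $\overline{\widehat{\mathcal G}(x)}=\widehat{\mathcal G}(x)$, i.e. $\widehat{\mathcal G}$ is real-valued.

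\emph{Smoothness.} For $j\le n_0-5$ the weight $k^j\mathcal G(k)$ is bounded near $0$ (where $\mathcal G$ vanishes) and is $O(|k|^{j-(n_0-3)})=O(|k|^{-2})$ at infinity, hence lies in $L^1(\R)$; thus $\partial_x^j\widehat{\mathcal G}(x)=\tfrac{\I^j}{2\pi}\int_\R k^j\mathcal G(k)\E^{\I kx}\,dk$ is continuous. For the top index $j=n_0-4$, where $k^{n_0-4}\mathcal G(k)$ only decays like $|k|^{-1}$, I would split $\mathcal G$ with a smooth cutoff equal to $1$ near $k=0$ into a compactly supported piece (harmless) and a piece supported away from $k=0$, where $\mathcal G=(k-\I\tau)^6\mathcal R(k)/k^2\in C^{m_0-1}$; one integration by parts on that piece converts the missing power of decay into the factor $x^{-1}$ from $\E^{\I kx}$, showing that $\partial_x^{\,n_0-4}\widehat{\mathcal G}$ exists and is continuous. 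Hence $\widehat{\mathcal G}\in C^{\,n_0-4}(\R)$.

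\emph{The weighted $L^1$ bound --- the main obstacle.} Because $\mathcal G\in C^{m_0-3}(\R)$ with $\mathcal G^{(l)}\in L^1(\R)$ and $\mathcal G^{(l)}(k)\to0$ as $|k|\to\infty$ for $l\le m_0-3$, repeated integration by parts gives $x^{m_0-3}\widehat{\mathcal G}(x)=c\,\widehat{\mathcal G^{(m_0-3)}}(x)$ for a constant $c$, so it suffices to prove $\widehat{\mathcal G^{(m_0-3)}}\in L^1(\R)$. I would write $\mathcal G^{(m_0-3)}=u+v$ with $v$ supported in $\{|k|\ge\delta_0\}$ and $u$ in $\{|k|\le2\delta_0\}$. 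On $\mathrm{supp}\,v$ the function $\mathcal G$ is $C^{m_0-1}$, so $v\in C^2$ with $v',v''=O(|k|^{-(n_0-3)})\in L^1(\R)$; two integrations by parts give $\widehat v(x)=O(|x|^{-2})$, and since $v\in L^1$ also $\widehat v$ is continuous, so $\widehat v\in L^1(\R)$. For the compactly supported $u$ — continuous and vanishing at $k=0$ — I would use the representation $\big(\mathcal R(k)/k^2\big)^{(m_0-3)}=\int_0^1(1-t)\,t^{m_0-3}\,\mathcal R^{(m_0-1)}(tk)\,dt$ together with the decay and continuity of $\mathcal R^{(m_0-1)}$ to conclude $\widehat u\in L^1(\R)$. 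This last estimate is where I expect the real work to lie: $\mathcal G^{(m_0-3)}$ is merely continuous — not differentiable — at the node $k=0$, so the bound for $\widehat u$ is borderline and has to be squeezed out of the fine behaviour of $\mathcal R$ near $0$ (its vanishing to order $m_0-1$, and the integral formula above) rather than from the mere membership $\mathcal G\in C^{m_0-3}$; the same borderline phenomenon is what forced the extra integration by parts at $j=n_0-4$ above. Adding $\widehat u$ and $\widehat v$ then yields $x^{m_0-3}\widehat{\mathcal G}\in L^1(\R)$.
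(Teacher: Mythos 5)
Your overall route --- reading everything off from Lemma~\ref{lem1} and \eqref{mathcalG} by standard Fourier duality --- is exactly what the paper intends: Corollary~\ref{cor1} is stated there with no argument beyond the phrase ``From Lemma~\ref{lem1} and \eqref{mathcalG} we get''. Your treatment of the symmetry $\mathcal G(-k)=\ol{\mathcal G(k)}$ via \eqref{imp77} and $R(-k)=\ol{R(k)}$, of the realness of $\hat{\mathcal G}$, and of the smoothness up to order $n_0-5$ is correct, and the extra integration by parts at the borderline order $n_0-4$ is the right device (though note it yields existence and continuity of $\partial_x^{\,n_0-4}\hat{\mathcal G}$ only for $x\neq 0$; continuity at $x=0$ requires the improper integral $\int k^{n_0-4}\mathcal G(k)\,dk$ to converge, which needs a separate cancellation argument).

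The genuine gap is in the third bullet, precisely at the point you flag yourself. For the compactly supported piece $u$ of $\mathcal G^{(m_0-3)}$ near $k=0$, the inputs you invoke --- continuity and decay of $\mathcal R^{(m_0-1)}$ together with a representation of $\bigl(\mathcal R(k)/k^2\bigr)^{(m_0-3)}$ as an average of $\mathcal R^{(m_0-1)}$ over $[0,k]$ (by Leibniz it is in fact a finite sum $\sum_j c_j\int_0^1(1-t)^{m_0-2-j}\mathcal R^{(m_0-1)}(tk)\,dt$, not the single term you wrote) --- do \emph{not} imply $\hat u\in L^1(\R)$: a continuous, compactly supported function need not have an integrable Fourier transform. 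These averages inherit only the modulus of continuity of $\mathcal R^{(m_0-1)}$ at $k=0$, and Lemma~\ref{lem1} records nothing beyond continuity there, so the step genuinely requires additional input: either a quantitative modulus of continuity (H\"older of order $>1/2$, so that Bernstein's theorem applies) or membership of $\mathcal G^{(m_0-3)}$ in a Sobolev space $H^s$, $s>1/2$, near $k=0$ --- and either property has to be traced back to the moment condition \eqref{low} on the initial data rather than to the bare statement of Lemma~\ref{lem1}. As it stands, your argument establishes $\hat v\in L^1$ for the piece supported away from $k=0$ but leaves the decisive local estimate, and hence $x^{m_0-3}\hat{\mathcal G}\in L^1(\R)$, unproven.
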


In terms of $\hat {\mathcal G}$ the function $\mathcal R$ can be represented as a sum,
\[\aligned \mathcal R(k)&=\frac{k^2}{(k-\I\tau)^6}\int_{-c^2 t}^{c^2 t} \hat {\mathcal G}(x)\E^{\I k x}dx +\frac{k^2}{(k-\I\tau)^6}\left(\int_{-\infty}^{-c^2 t}+\int_{c^2 t}^\infty\right)\hat{\mathcal  G}(x)\E^{\I k x}dx \\
&=:\mathcal R_a(k,t) + \mathcal R_r(k,t).
\endaligned\]
The function $\mathcal R_a(k,t)$ can be continued analytically in the strip $0<\im k< \frac c 2$.
Moreover, from Corollary~\ref{cor1} it follows that
\be\label{imp88} \mathcal R_a(-k,t)=\ol{\mathcal R_a(k,t)},\quad \mathcal R_r(-k,t)=\ol{\mathcal R_r(k,t)},\quad k\in \R.\ee
\begin{lemma}\label{lem4}
The following estimates hold:
\begin{enumerate} 
\item $\mathcal R_r(\cdot,t)\in L^1(\R)\cap L^{\infty}(\R)$;
\item $|\mathcal R_r(k,t)|\leq \frac{C}{t^{m_0-3}}\frac{k^2}{k^6 +1}$, $k\in\R$;
\item $|\mathcal R_a(k,t)|\leq C\frac{|k|^2}{|k|^{6}+ 1}\E^{c^2|\im k|t}.$
\end{enumerate}
\end{lemma}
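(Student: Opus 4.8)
The plan is to obtain all three bounds from two ingredients. The first is the elementary comparison of the rational prefactor occurring in $\mathcal R_a$ and $\mathcal R_r$ with the target weight, namely
\[
\left|\frac{k^2}{(k-\I\tau)^6}\right|\le C\,\frac{|k|^2}{|k|^6+1},\qquad |\im k|\le\tfrac{c}{2}.
\]
This uses $\tau=\tfrac{c+\kappa_1}{2}>\tfrac{c}{2}$: on the strip $|\im k|\le\tfrac c2$ the quantity $|k-\I\tau|\ge\tau-\tfrac c2=\tfrac{\kappa_1}{2}$ stays bounded away from $0$, while $|k-\I\tau|\ge|k|-\tau$ controls the decay at infinity, so that $|k-\I\tau|^6$ is bounded below by a multiple of $|k|^6+1$ on that strip. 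The second ingredient is the integrability of $\hat{\mathcal G}$ recorded in Corollary~\ref{cor1}: since $m_0-3\ge1$ and $\hat{\mathcal G}$ is continuous, $\hat{\mathcal G}\in L^1(\R)$, and in addition $x^{m_0-3}\hat{\mathcal G}(x)\in L^1(\R)$.

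For item (2) I would estimate, for $k\in\R$, the tail Fourier integral by a moment bound,
\[
\Bigl|\Bigl(\int_{-\infty}^{-c^2t}+\int_{c^2t}^{\infty}\Bigr)\hat{\mathcal G}(x)\E^{\I k x}\,dx\Bigr|\le\int_{|x|\ge c^2 t}|\hat{\mathcal G}(x)|\,dx\le\frac{1}{(c^2 t)^{m_0-3}}\int_{\R}|x|^{m_0-3}|\hat{\mathcal G}(x)|\,dx=O(t^{-(m_0-3)}),
\]
and then multiply by the prefactor bound above. Item (1) is then immediate, because the majorant $\frac{k^2}{k^6+1}$ is bounded on $\R$ and decays like $k^{-4}$ at infinity, hence lies in $L^1(\R)\cap L^\infty(\R)$; alternatively one may write $\mathcal R_r=\mathcal R-\mathcal R_a$ and invoke the known decay of $\mathcal R$ from Lemma~\ref{lem1}.

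For item (3), the truncation to the interval $[-c^2t,c^2t]$ makes $\int_{-c^2t}^{c^2t}\hat{\mathcal G}(x)\E^{\I k x}\,dx$ an entire function of $k$ (the integrand is entire in $k$, and $\hat{\mathcal G}$ is continuous, hence integrable, on the compact interval), which is precisely what justifies the analytic continuation of $\mathcal R_a(\cdot,t)$ off the pole at $\I\tau$, in particular across the strip $0<\im k<\tfrac c2$ asserted in the text. On $|\im k|\le\tfrac c2$ I would then bound
\[
\Bigl|\int_{-c^2t}^{c^2t}\hat{\mathcal G}(x)\E^{\I k x}\,dx\Bigr|\le\int_{-c^2t}^{c^2t}|\hat{\mathcal G}(x)|\,\E^{|x|\,|\im k|}\,dx\le\E^{c^2|\im k|t}\,\|\hat{\mathcal G}\|_{L^1(\R)},
\]
using $|x|\le c^2t$ on the range of integration, and multiply by the prefactor bound to reach the stated estimate.

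I do not expect a genuine obstacle here: all the substantive work has already been done in the construction of $\mathcal R$, $\mathcal G$ and in the moment bound of Corollary~\ref{cor1}, and the present lemma is essentially a packaging step. The only points that need a little care are the uniformity of the prefactor comparison up to $\im k=\tfrac c2$ — which is exactly why $\tau$ was chosen strictly larger than $\tfrac c2$ — and checking $\hat{\mathcal G}\in L^1(\R)$, so that the constant $\|\hat{\mathcal G}\|_{L^1(\R)}$ appearing in item (3) is finite.
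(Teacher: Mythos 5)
Your proof is correct, and since the paper states Lemma~\ref{lem4} without proof, your argument supplies exactly the standard reasoning the authors intend: the Chebyshev-type tail bound $\int_{|x|\ge c^2t}|\hat{\mathcal G}|\,dx\le (c^2t)^{-(m_0-3)}\int|x|^{m_0-3}|\hat{\mathcal G}|\,dx$ from Corollary~\ref{cor1} for items (1)--(2), the trivial exponential bound $|\E^{\I kx}|\le\E^{c^2|\im k|t}$ on the compact range for item (3), and the uniform comparison $|k-\I\tau|^6\ge C(|k|^6+1)$ on the strip $|\im k|\le c/2$ (valid because $\tau=\tfrac{c+\kappa_1}{2}>c>\tfrac c2$). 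All the inputs you invoke ($\hat{\mathcal G}\in L^1$, the moment bound, entirety of the truncated Fourier integral) are indeed available at this point in the paper, so there is no gap.
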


\subsection{Estimates for the jump matrices}\label{sub2.3}

Recall that the jump matrix $v(k,j)=v^{sol}(k,j)$ satisfies for $k\in \R$ the symmetry property
\be\label{ext}v(k,j)=\sigma_1 v(-k,j)\sigma_1.\ee On $\R_+$  we factorize this matrix in the following way:
\be\label{see}v(k,j)=\sigma_1 [V(-k,j)]^{-1}[W(-k,j)]^{-1}\sigma_1 Y(k,j) W(k,j)V(k,j),\quad k\in \R_+,
\ee
where
\be\label{ygr}Y(k,j)=\begin{pmatrix}
1-|\mathcal R_r(k,t)|^2 & - \mathcal R_r(-k,t)P_{j+1}^{2}(k) \E^{-2t\Phi(k)} \\
P_{j+1}^{-2}(k)\mathcal R_r(k,t) \E^{2t\Phi(k)} & 1
\end{pmatrix},
\ee
\be\label{nui}
V(k,j)=\begin{pmatrix} 1&0\\ q(k)P_{j+1}^{-2}(k)\E^{2t\Phi(k)}&1\end{pmatrix},\quad W(k,j)=\begin{pmatrix} 1&0\\ \mathcal R_a(k,t)P_{j+1}^{-2}(k)\E^{2t\Phi(k)}&1\end{pmatrix}.
\ee
Respectively, due to \eqref{imp88} and \eqref{imp77} one has
\[\sigma_1 V(-k,j)\sigma_1=\begin{pmatrix} 1& - \ol{q(k)}P_{j+1}^{2}(k)\E^{-2t\Phi(k)}\\0&1\end{pmatrix},
\ \sigma_1W(-k,j)\sigma_1=\begin{pmatrix} 1& - \ol{\mathcal R_a(k)}P_{j+1}^{2}(k)\E^{-2t\Phi(k)}\\0&1\end{pmatrix}.\]
Extend now the factorization \eqref{see} to $\R_-=\R_+^*$ by \eqref{ext}. 
The matrices $V(k,j)$ and $W(k,j)$ admit analytic continuations to the domains $\mathcal O^\pm$ defined by \eqref{mathcalO}.
In $\mathcal O^+$ they read as in \eqref{nui} while in $\mathcal O^-$ we have
\[
V(k,j)=\begin{pmatrix} 1&0\\ p(k)P_{j+1}^{-2}(k)\E^{2t\Phi(k)}&1\end{pmatrix},\quad W(k,j)=\begin{pmatrix} 1&0\\ \mathcal R_a(k,t)P_{j+1}^{-2}(k)\E^{2t\Phi(k)}&1\end{pmatrix}.
\]
 Introduce symmetric domains in the lower half plane, 
 \[[\mathcal O^\pm]^*=\{k: -k\in \mathcal O^\pm\}.
 \]
Redefine $m(k,j)$ (resp., $m^{sol}(k,j)$) in $\mathcal O^+$ and in $[\mathcal O^-]^*$ by 
\[\aligned
\hat m(k,j)&:=m(k,j)[V(k,j)]^{-1}[W(k,j)]^{-1}, \qquad k\in \mathcal O^+, \\
 \hat m(k,j)&:=m(k,j)\sigma_1[V(-k,j)]^{-1}[W(-k,j)]^{-1}\sigma_1,\qquad k\in [\mathcal O^-]^*,
 \endaligned\]
and extend this redefinition to $\mathcal O^-\cup[\mathcal O^+]^*$ by the symmetry
$\hat m(-k,j)=\hat m(k,j)\sigma_1$. 
In the remaining region $\C\setminus\big(\ol{\mathcal O^-\cup[\mathcal O^+]^*\cup\mathcal O^+\cup[\mathcal O^-]^*}\big)$ we keep $\hat m(k,j)=m(k,j)$.

Introduce additional contours  as in Fig.~\ref{fig:1}, 
\[
\mathcal C=\{k\in\C: \im k=\frac c 2\}, \quad\mbox{and} \ \  
\mathcal C^* =\{k\in\C: \im k=- \frac c 2\},
\]
oriented left-to-right and right-to-left.  Let us split the new jump contour $\hat \Sigma$ into symmetric parts with 
respect to $k\mapsto -k$ and denote 
\[\hat\Sigma=\Sigma_{\mathcal C}\cup\Sigma_{\mathcal C}^*=\Sigma \cup\mathcal C\cup \mathcal C^*,\]
where $\Sigma$ is defined in \eqref{defcont}. Then {\it the  vector function $\hat m(k,j)$ (resp.\ $\hat m^{sol}(k,j)$) is the unique piecewise holomorphic (resp.\ meromorphic with two simple poles at $\pm \I\kappa_j$) solution of the  jump problem  \[\hat m_+(k,j)=\hat m_-(k,j)\hat v(k,j)\quad  (\mbox{resp.}\  \hat m_+^{sol}(k,j)=\hat m_-^{sol}(k,j)\hat  v^{sol}(k,j)),\]
 where
\be \label{eqj}
\hat v(k,j)=\left\{\begin{array}{ll}
Y(k,j),& k\in\R_+,\\[2mm]
 \left(W(k,j)V(k,j)\right)^{-1}, & k\in \mathcal C,\\[2mm]
[W_-(k,j)V_-(k,j)]^{-1}v(k,j)V_+(k,j)W_+(k,j), & k\in [ \frac {\I c}{2},0],\\[2mm] 
v(k,j),& k\in [\I c, \frac {\I c}{2}]\cup_{l=1}^N \mathbb T_l,\\[2mm]
  \sigma_1 \hat v(-k,j)\sigma_1,& k\in \Sigma^*_{\mathcal C}.
\end{array}\right.
\ee

\[(\mbox{resp.}\ \hat v^{sol}(k,j)=\hat v(k,j),\ \ k\in \Sigma_{\mathcal C}\cup\Sigma_{\mathcal C}^*\setminus(\mathbb T_j\cup \mathbb T_j^*);\quad 
\hat v^{sol}(k,j)=\id,\ \ k\in \mathbb T_j\cup \mathbb T_j^*.)\]

The pole conditions for $\hat m^{sol}(k,j)$, the symmetry and normalization conditions are the same as for $m(k,j)$ (resp.\ 
$m^{sol}(k,j)$).
}
\begin{lemma}\label{estvhat} For $x\geq 4c^2 t+\frac{\beta}{c}\log t$, we have
\be\label{estun} \|k^s\,\left(\hat  v(k,j)-\id\right)\|_{L^1(\hat \Sigma)\cap L^\infty (\hat\Sigma)}=O(t^{-\nu}), \quad \nu=\min\{m_0-3,\ \beta+1\}, \quad s=0,1,2. \ee
\end{lemma}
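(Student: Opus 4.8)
The plan is to go through the pieces of the contour $\hat\Sigma$ one at a time and, on each piece, exploit the sign of $\re\Phi(k)$ together with the decay estimates for the scattering data collected in Lemmas~\ref{lem1}, \ref{lem4} and Corollary~\ref{cor1}. Throughout one uses that on $\mathcal D$ we have $\frac xt\ge 4c^2+\frac{\beta}{ct}\log t$, so that $\re\Phi(k)=-4\im k\,(3(\re k)^2-(\im k)^2)-\frac xt\im k$ is negative on the relevant arcs, and moreover that the Blaschke factors $P_{j+1}(k)$ are bounded with bounded inverse away from the circles $\mathbb T_l$. The key observation is that the $\log t$ shift in the definition of $\mathcal D$ produces an extra factor $\E^{-2\beta\im k\log t}=t^{-2\beta\im k}$ in the exponential, which is what converts the merely polynomial decay of the data into the claimed $t^{-(\beta+1)}$ rate on the parts of the contour near the continuous spectrum.

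First I would treat the axis contribution $\R_+$ (and by symmetry $\R_-$), where $\hat v(k,j)-\id = Y(k,j)-\id$. The off-diagonal entries of $Y$ are $\mathcal R_r(\pm k,t)P_{j+1}^{\mp2}(k)\E^{\pm2t\Phi(k)}$ and the $(1,1)$ entry is $-|\mathcal R_r(k,t)|^2$; since $\re\Phi\le 0$ on $\R$ and $|\E^{2t\Phi(k)}|\le 1$ there, the factor $k^s$ for $s=0,1,2$ is absorbed by the $\frac{k^2}{k^6+1}$ weight in Lemma~\ref{lem4}(2), and one reads off $\|k^s(Y-\id)\|_{L^1\cap L^\infty(\R_+)}=O(t^{-(m_0-3)})$ directly. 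Next, on the horizontal lines $\mathcal C=\{\im k=c/2\}$ (and $\mathcal C^*$) one has $\hat v=(W(k,j)V(k,j))^{-1}$, whose only nontrivial entry is $-(q(k)+\mathcal R_a(k,t))P_{j+1}^{-2}(k)\E^{2t\Phi(k)}$. Here $\im k=c/2$ is fixed, $\re\Phi(k,x,t)\le -\tfrac c2\big(\tfrac xt - 3(\re k)^2 + \tfrac{c^2}{4}\big)$, and on $\mathcal D$ this gives $\re\Phi\le -\tfrac c2\cdot\tfrac\beta{ct}\log t + (\text{something} \le 0) + O(1)$ wait — more precisely $t\re\Phi(k)\le -\tfrac{\beta}{2}\log t + (\tfrac xt-4c^2)\cdot(\cdots)$; combined with $|\mathcal R_a(k,t)|\le C\E^{c^2 t/2}$ from Lemma~\ref{lem4}(3) one still wins an overall factor, but I must check the bookkeeping so that the residual exponent is exactly $-(\beta+1)$ (the ``$+1$'' coming from the quadratic vanishing of $\re\Phi$ in $\re k$ near $k=\I c/2$, i.e.\ a Gaussian integral in the transverse variable). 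The circles $\mathbb T_l$ are handled separately: for $l\le j$ the jump is $A_l(k)[P_{j+1}(k)]^{-\sigma_3}$ whose deviation from $\id$ contains $\gamma_l^2\E^{t\Phi(\I\kappa_l)}$; since $(x,t)\in D_j$ means $x\le(4\kappa_{j}^2-\varepsilon)t$ when $l\le j\le$ the relevant index (using the ordering of the $\kappa$'s one has $\re\Phi(\I\kappa_l,x,t)=-4\kappa_l^3 + \kappa_l\frac xt\le \kappa_l(4\kappa_l^2 - 4\kappa_j^2 - \cdots)<0$ with a gain linear in $t$), this contribution is exponentially small, hence $O(t^{-\nu})$ for free; similarly for $l>j$ with $B_l$, where one uses $x\ge(4\kappa_{j-1}^2+\varepsilon)t$ — but care is needed on the adjacent ratio $4\kappa_j^2$ vs $4\kappa_{j+1}^2$ so that exactly one of the $A$/$B$ choices is exponentially suppressed; this is the standard conjugation trick and I would cite \cite{EGKT, EPT} for the sign pattern.

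Then comes the vertical segment $[\tfrac{\I c}{2},0]\subset[\I c,0]$, where $\hat v-\id$ contains the function $f(k)$ of Lemma~\ref{lem1}(2) together with $\chi(k)$-type and $p,q$-type terms multiplied by $P_{j+1}^{-2}(k)\E^{2t\Phi(k)}$. Writing $k=\I h$, $h\in[0,c/2]$, one has $\re\Phi(\I h,x,t)=4h^3 - h\frac xt\le -h(\frac xt - 4h^2)\le -h(\frac xt-c^2)$, and on $\mathcal D$ the worst case $x=4c^2t+\frac\beta c\log t$ gives $\E^{2t\re\Phi(\I h)}\le \E^{-2h(3c^2 t + \frac\beta c\log t)} = \E^{-6c^2ht}\,t^{-2h\beta/c}$. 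The segment $[\I c,\tfrac{\I c}{2}]$ and the circles' small neighborhoods near $\I c$ are analogous but there one must also use the nonresonance hypothesis $W(\I c)\ne 0$ to control $\chi$. The remaining, and in my view hardest, estimate is precisely the interplay on $[\tfrac{\I c}{2},0]$ between the vanishing of $f(k)=O(h^{m_0-1})$ at $h=0$ — which kills the non-integrable behaviour there — and the factor $t^{-2h\beta/c}$, which decays in $t$ only for $h$ bounded away from $0$: one must split the segment at $h\sim(\log t)/t$, estimate the inner piece by the vanishing of $f$ (getting $O((\log t/t)^{m_0-1})$, comfortably $O(t^{-(m_0-3)})$ after accounting for the $s\le 2$ powers of $k=\I h$), and the outer piece by $\int_{(\log t)/t}^{c/2} h^{?}\,t^{-2h\beta/c}\,\E^{-6c^2 ht}\,dh$, a Laplace-type integral whose value is $O(t^{-(\beta+1)})$ — the ``$-1$'' again from the single power of $h$ produced by the measure $dh$ near the lower endpoint. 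Balancing these two regimes against each other is what produces $\nu=\min\{m_0-3,\beta+1\}$, and it is the one place where all the regularity exponents $m_0,n_0$ and the geometric parameter $\beta$ must be tracked simultaneously; everything else is routine contour-by-contour bookkeeping of the kind already done in \cite{EGKT, EMT1}.
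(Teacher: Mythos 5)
Your contour-by-contour strategy matches the paper's, and your treatment of $\R_+$ (via Lemma~\ref{lem4}(2)), of the circles $\mathbb T_l$, and of the lines $\mathcal C$, $\mathcal C^*$ is essentially correct. But there is a genuine gap: you have misplaced the source of the exponent $\beta+1$, and the mechanism you propose for it does not work. You first float the idea that $\mathcal C$ contributes $t^{-(\beta+1)}$ via a Gaussian integral in $\re k$; in fact that contribution is exponentially small, since for $\im k=c/2$ and $x\ge 4c^2t$ one has $2t\re\Phi(k)\le t(-\tfrac52 c^3-12c(\re k)^2)$, which together with Lemma~\ref{lem4}(3) gives $O(\E^{-Ct})$ uniformly. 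You then assert that the balance producing $\nu=\min\{m_0-3,\beta+1\}$ happens on $[\tfrac{\I c}{2},0]$ through a Laplace integral $\int h^{?}\,t^{-2h\beta/c}\E^{-6c^2ht}\,dh=O(t^{-(\beta+1)})$. This is false: the dominant contribution of such an integral comes from $h\sim 1/t$, where the factor $t^{-2h\beta/c}$ is $\approx 1$ and yields no gain in $\beta$ at all. On that segment the correct bound is $|\hat v_{21}(\I h,j)|\le C\,h^{m_0-1}\E^{-4c^2 ht}\le C t^{-(m_0-1)}$, coming solely from the $(m_0-1)$-fold vanishing of $f$ at $h=0$ (Lemma~\ref{lem1}(2)); it is independent of $\beta$ and is not the binding constraint.

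The exponent $\beta+1$ actually arises on the segment $[\I c,\tfrac{\I c}{2}]$, which you dismiss as ``analogous.'' There the phase degenerates at the edge of the continuous spectrum: writing $x=4c^2t+\zeta$ with $\zeta=\tfrac{\beta}{c}\log t$, one has $t\Phi(\I\kappa)=4\kappa(\kappa^2-c^2)t-\kappa\zeta$, so the cubic part vanishes linearly as $\kappa\to c$ and gives only $\E^{-C(c-\kappa)t}$, while the shift $\zeta$ contributes $\E^{-2\kappa\zeta}\le t^{-\beta}$ for $\kappa\ge c/2$. The nonresonance condition $W(\I c)\ne 0$ forces $\chi(k)=C(k-\I c)(1+o(1))$, and the product $(c-\kappa)\E^{-C(c-\kappa)t}$ is maximized at $c-\kappa\sim 1/t$, giving $O(1/t)$; multiplying by $t^{-\beta}$ yields exactly $O(t^{-\beta-1})$. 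This interplay between the linear vanishing of $\chi$ at $\I c$ (nonresonance) and the logarithmic shift in the definition of $\mathcal D$ is the heart of the lemma, and it is missing from your argument: without it you cannot obtain any algebraic decay on $[\I c,\tfrac{\I c}{2}]$ in the region $D_0$, where $x/t-4c^2$ is not bounded below by a positive constant.
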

\begin{proof} Estimate \eqref{estun} is trivial for the matrices \eqref{vspom} on the circles $\mathbb T_l$ and $\mathbb T_l^*$. It is also evident for $Y(k,j)$ due to Lemma \ref{lem4}, item 2. Thus, we already have
\be\label{estun1} \|k^{s}\,\left(\hat  v(k,j)-\id\right)\|_{L^1(\tilde\Sigma)\cap L^\infty (\tilde \Sigma)}=O(t^{-m_0+3}), \quad \tilde\Sigma=\R\cup_{l=1}^N (\mathbb T_l\cup\mathbb T_l^*). \ee
Now let $k=\frac {\I c}{2} +y\in \mathcal C \cap \{k: \re k\geq 0\}$. 
The only nonzero (off diagonal) element of the jump matrix $W^{-1}V^{-1}-\id$ can be estimated as 
\[\aligned |W_{21}(k,j)+V_{21}(k,j)|&\leq |P_{j+1}(k) |\cdot|g(k)+\mathcal R_a(k,t)|\E^{2t \re \Phi(k)}\leq C \frac{|k|^2}{|k-\I\tau|^{6}}\E^{\frac{c^3}{2} t+ 2t \re \Phi(k)}\\
&\leq C(m_0, j)\frac{1}{y^4 + 1}\E^{t(- \frac{5}{2} c^3 -12 c y^2)}, \quad y>0, \quad x\geq 4c^2 t.\endaligned\]
 On the remaining parts of $\mathcal C$ and $\mathcal C^*$ the estimates are literally the same. As a result we get
\[
\|k^{s}\,\left(\hat  v(k,j)-\id\right)\|_{L^1(\mathcal C\cup \mathcal C^*)\cap L^\infty (\mathcal C\cup \mathcal C^*)}\leq \E^{-Ct}, \quad C>0, \quad x\geq 4c^2 t.\]
It remains to estimate $\hat v(k,j)-\id=\hat v^{sol}_j(k,j)-\id$ on the contour $[\I c, 0]$. The only nonzero  element of this
matrix is \be\label{chino}\hat v_{21}(k,j)=P_{j+1}^{-2}(k)\E^{2t\Phi(k)}
\begin{cases}\aligned \chi(k), &\quad k\in [\I c,\tfrac {\I c}{2}],\\ f(k), &\quad k\in [\tfrac {\I c}{2},0],\endaligned\end{cases}\ee
where the function $f(k)$ continuous on $[\tfrac {\I c}{2},0]$ is defined in \eqref{chivn} and satisfies \eqref{imp99}.
Note that the analytic continuation of $\mathcal R_a(k,t)$ does not have a jump on $[\tfrac {\I c}{2}, 0]$. This was taken into account  to get \eqref{chino}. 

Consider first the contour $[\I c,\tfrac {\I c}{2}]$. Recall that we study the nonresonant case and therefore
\[\chi(k)=C(k-\I c)(1+o(1)), \quad k\to \I c, \quad C\neq 0,\] 
see \eqref{lims}, \eqref{defchi1}. For $x\geq 4 c^2 t + \zeta$, $\zeta\geq 0$, an elementary estimate 
holds for $k=\I\kappa$, $\tfrac c 2 \leq \kappa\leq c$,
\[ t\Phi(\I\kappa, x, t)=4(\kappa^2 - c^2)\kappa t -\kappa\zeta<(\kappa -c)\frac{3 c^2}{4} t - \frac c2 \zeta\leq 0.\]
Therefore,
\be\label{hatv1}|\hat v_{21}(\I\kappa,j)|\leq C (c-\kappa)\E^{-C(c-\kappa) -c\zeta}.\ee
For all regions under consideration except of $D_0$ we have $\zeta\geq (4\kappa_1^2 - 4 c^2 - \varepsilon)t$, and therefore 
\[ 
|\hat v_{21}(k,j)|+|\hat v^{sol}_{21}(k,j)|\leq C\E^{-\delta t},\quad k\in [\I c, \tfrac {\I c}{2}],\quad (x,t)\in D_j\cup D_j^{sol},
\] where $ C=C(j)>0$, $\delta=\delta(j)>0$, $j=1,\dots, N.$
In the domain $D_0$, which depends on $\beta$, consider first the case $\beta=0$, that is,  $\zeta=0$. 
 We observe that the function on the r.h.s.\ in \eqref{hatv1} can be estimated from above via  $\max_{y\in\R_+}u(y,t)$, where
$u(y,t)=y\E^{-C y t}$. Since $\frac{\pa u}{\pa y}=0$ for $y=\frac{1}{Ct}$ and $\max_{y\in\R_+}u(y,t)=\frac{1}{C t}$, we conclude that 
$|\hat v_{21}(k,0)|=O(\frac{1}{t})$ uniformly with respect to $k$ and $x$ in the domain under consideration.
If $\beta>0$, that is, $D_0=\{(x,t): x\geq 4c^2 t + \frac{\beta}{c} \log t\}$, then
\[
|\hat v_{21}(k,0)|=O(t^{-\beta-1}), \quad k\in [\I c, \tfrac {\I c}{2}],\quad (x,t)\in D_0.
\]
It remains to estimate $\hat v_{21}(k,j)=\hat v^{sol}_{21}(k,j)$ on the interval $[\tfrac {\I c}{2}, 0]$.
Taking into account \eqref{chino} and \eqref{imp99} we conclude that
\[|\hat v_{21}(\I h,j)|\leq C (j) h^{m_0 - 1}\E^{8h^3t - 8c^2ht -2\frac{\beta}{c} h \log t}\leq C  \max_{h\in[0,\tfrac c 2]}h^{m_0-1}\E^{-4c^2t h},\  j=0, \dots, N.
\]
Since 
\[\max_{y\in\R_+}y^{m_0-1}\E^{-Cyt}\leq \frac{C(m_0)}{t^{m_0-1}},\]
then
\be\label{fin}|\hat v_{21}(k,j)| +|\hat v^{sol}_{21}(k,j)|\leq \frac{C(m_0,j)}{t^{m_0-1}},\quad k\in [\tfrac {\I c}{2}, 0].\ee
Collecting the estimates \eqref{estun1}--\eqref{fin} together and taking into account the symmetry property \[\hat v(k,j)=\sigma_1\hat v(-k,j)\sigma_1,\quad k\in\hat \Sigma,\] we get \eqref{estun}. 
\end{proof}
Hence we can reformulate our pre-model RHP as follows

\begin{theorem} \label{pre-mod} Assume that $(x,t)\in D_j$  (resp.\ $(x,t)\in D_j^{sol}$). Then 
\begin{itemize} \item the  vector function $\hat m(k,j,x,t)$ (resp.\ $\hat m^{sol}(k,j,x,t)$) is the unique piecewise holomorphic (resp.\ meromorphic) solution of the  jump problem  \[\hat m_+(k,j)=\hat m_-(k,j)(\id +\mathcal W(k,j))\quad  (\mbox{resp.}\  \hat m_+^{sol}(k,j)=\hat m_-^{sol}(k,j)(\id +\mathcal W(k,j))), \quad k\in \hat \Sigma,\]
 where the jump matrix $\mathcal W(k,j)$ satisfies the symmetry condition
 \be\label{symW}
 \mathcal W(k,j)=\sigma_1\mathcal W(-k,j)\sigma_1, \quad k\in \hat \Sigma,
 \ee
 and the following estimate:
 \be\label{nusi}\|k^s\,\mathcal W(k,j)\|_{L^1(\hat \Sigma)\cap L^\infty (\hat\Sigma)}=O(t^{-\nu}), \quad \nu=\min\{m_0-3,\ \beta+1\}, \quad s=0,1,2. \ee
 In a vicinity of the point $k=0$,
 \be\label{appr} \mathcal W(k,j)=O(k^2), \quad k\in\hat \Sigma,\quad k\to 0.\ee
 \item
 Both functions $\hat m(k,j)$  and $\hat m^{sol}(k,j)$ satisfy the same symmetry conditions
 \be \label{eqsym}
\hat m(-k,j) =\hat  m(k,j) \sigma_1,\quad  \ k\in \C\setminus \hat\Sigma,
\ee
and  normalization conditions
\be\label{normcond0}
\lim_{\kappa\to\infty}\hat m(\I\kappa,j) = \lim_{\kappa\to\infty} \hat m^{sol}(\I\kappa,j) = (1\ \ 1).
\ee
\item
The function $\hat m^{sol}(k,j)$ has simple poles at $\pm\I \kappa_j$ and satisfies
the pole conditions
\be
\aligned \label{eqpo}
\res_{\I\kappa_j} \hat m^{sol}(k,j) &= \lim_{k\to\I\kappa_j} \hat m^{sol}(k,j)
\begin{pmatrix} 0 & 0\\ \I \ga_j^2(x,t)   & 0 \end{pmatrix},\\
\res_{-\I\kappa_j} \hat m^{sol}(k,j) &= \lim_{k\to -\I\kappa_j} \hat m^{sol}(k,j)
\begin{pmatrix} 0 & - \I \ga_j^2 (x,t) \\ 0 & 0 \end{pmatrix},
\endaligned
\ee
where
\be\label{gamo}\gamma_j^2(x,t)=\gamma_j^2\E^{8\kappa_j^3 t-2\kappa_j x}\prod_{l=j+1}^N \left(\frac{\kappa_l - \kappa_j}{\kappa_l + \kappa_j}\right)^2, \quad (x,t)\in D_j^{sol}.\ee
\item In a vicinity of $k=0$,
\be\label{esti} \hat m^{sol}_{1}(k,j)=\hat m^{sol}_{2}(k,j) +O(k^2), \quad k\to 0.\ee
\end{itemize}
\end{theorem}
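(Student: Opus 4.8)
The plan is to read the statement off the transformation $m(k,j)\mapsto\hat m(k,j)$, $m^{sol}(k,j)\mapsto\hat m^{sol}(k,j)$ performed above, feeding it through Theorem~\ref{thm:vecrhp}, Lemma~\ref{estvhat} and Subsections~\ref{sub2.2}--\ref{sub2.3}. On $\mathcal O^\pm$ and $[\mathcal O^\pm]^*$ this passage is merely multiplication on the right by the explicit, invertible, lower triangular analytic matrices $[V(k,j)]^{-1}[W(k,j)]^{-1}$ (and their $\sigma_1$-conjugates), while $\hat m=m$ on the complement. As these four sectors stay at a fixed positive distance from the points $\pm\I\kappa_l$ and from $k=\I\infty$, the piecewise holomorphy (resp.\ meromorphy, with the two simple poles at $\pm\I\kappa_j$ untouched), the symmetry \eqref{eqsym}, the normalization \eqref{normcond0} and uniqueness are inherited verbatim from Theorem~\ref{thm:vecrhp}. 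The jump matrix is $\mathcal W(k,j):=\hat v(k,j)-\id$ with $\hat v$ as in \eqref{eqj}; its symmetry \eqref{symW} is the relation $\hat v(k,j)=\sigma_1\hat v(-k,j)\sigma_1$ already used in Lemma~\ref{estvhat}, and \eqref{nusi} is precisely \eqref{estun}. Finally \eqref{eqpo}--\eqref{gamo} is the pole condition of Theorem~\ref{thm:vecrhp} after substituting $\Phi(\I\kappa_j)=4\kappa_j^3-\kappa_j\tfrac xt$ and $P_{j+1}(\I\kappa_j)=\prod_{l=j+1}^N\tfrac{\kappa_j+\kappa_l}{\kappa_j-\kappa_l}$, which turn $\gamma_j^2 P_{j+1}^{\mp2}(\I\kappa_j)\E^{2t\Phi(\I\kappa_j)}$ into the quantity $\gamma_j^2(x,t)$ of \eqref{gamo}.

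For \eqref{appr} I would argue piece by piece on the part of $\hat\Sigma$ inside a fixed small disc around $k=0$. The circles $\mathbb T_l,\mathbb T_l^*$, the lines $\mathcal C,\mathcal C^*$ and the top part $[\I c,\tfrac{\I c}{2}]$ of the cut lie at positive distance from $0$ and do not contribute. On $\R_\pm$ one has $\mathcal W(k,j)=Y(k,j)-\id$, and the factor $k^2$ in $\mathcal R_r(k,t)=\tfrac{k^2}{(k-\I\tau)^6}(\cdots)$ forces $Y(k,j)-\id=O(k^2)$ via \eqref{ygr}. On the remaining part $[\tfrac{\I c}{2},0]\cup[-\tfrac{\I c}{2},0]$ of the cut the only nonzero entry of $\mathcal W(k,j)$ is $\hat v_{21}(k,j)=P_{j+1}^{-2}(k)\E^{2t\Phi(k)}f(k)$ with $f(k)=O(k^{m_0-1})=O(k^3)$ by \eqref{imp99} and $m_0\ge4$. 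Combined with \eqref{symW} this yields \eqref{appr}.

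The substantive step is \eqref{esti}. I would use that near $k=0$ one has $m^{sol}(k,j)=m(k,x,t)[P_{j+1}(k)]^{-\sigma_3}$ with $m=(m_1,m_2)$ the original vector \eqref{defm}, so that in $\mathcal O^+$ (and, by \eqref{eqsym}, in the three adjacent sectors)
\[
\hat m^{sol}_2(k,j)=P_{j+1}(k)\,m_2(k,x,t),\qquad
\hat m^{sol}_1(k,j)=P_{j+1}(k)^{-1}\Big(m_1(k,x,t)-m_2(k,x,t)\big(q(k)+\mathcal R_a(k,t)\big)\E^{2t\Phi(k)}\Big).
\]
Exploiting $\Phi(0)=0$, $\mathcal R_a(0,t)=0$, $\partial_k\mathcal R_a(0,t)=0$, $q(0)=R(0)$, $q'(0)=R'(0)$ (from \eqref{anal4}), $P_{j+1}(0)^{-1}=P_{j+1}(0)=\pm1$, the differentiated symmetry $\partial_k\hat m^{sol}_1(0,j)=-\partial_k\hat m^{sol}_2(0,j)$ coming from \eqref{eqsym}, together with the edge relations for $m_1,m_2$ at $k=0$ that follow from the scattering relation and the reality of the right Jost solution $\phi(\cdot,x,t)$ on $[0,\I c]$ (so that $m_2$ and its $k$-derivative are real, resp.\ purely imaginary, at $0$) and with the reality relations $R(-k)=\overline{R(k)}$, $R^{(l)}(+0)=(-1)^l\overline{R^{(l)}(+0)}$ of Subsection~\ref{sub2.2}, one expands $\hat m^{sol}_1(k,j)-\hat m^{sol}_2(k,j)$ in powers of $k$: the constant term vanishes outright, and a direct check shows that the linear term vanishes as well, which is \eqref{esti}.

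The main obstacle is precisely this local expansion at the node $k=0$: one must keep simultaneous track of the scattering relation, the symmetry/reality relations of Subsection~\ref{sub2.2}, the reality of $\phi$ on $[0,\I c]$, and the construction of $q$, $p$ as matching Taylor jets of $R$ at $\pm0$; the estimates \eqref{appr} and \eqref{esti} are exactly the compatibility statements that survive there. Every remaining assertion of the theorem is bookkeeping on the already-constructed objects.
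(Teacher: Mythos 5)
Your treatment of everything except \eqref{esti} is correct and essentially coincides with the paper's: the jump, symmetry, normalization and pole conditions are inherited verbatim from Theorem~\ref{thm:vecrhp} through the explicit triangular transformations, \eqref{nusi} is Lemma~\ref{estvhat}, and \eqref{appr} follows piece by piece from the $k^2$ factor in $\mathcal R_r$, $\mathcal R_a$ and from $f(k)=O(k^{m_0-1})$ with $m_0\ge 4$ --- exactly the references the paper lists.

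The gap is in \eqref{esti}, and it sits precisely at the step you defer to ``a direct check''. The paper does not expand the Jost solutions at all here: it argues softly, combining (i) $\hat v^{sol}(k,j)=\id+O(k^2)$ near $k=0$ (i.e.\ \eqref{appr}), (ii) boundedness and continuity of the limits of $\hat m^{sol}$ at the node, and (iii) the symmetry \eqref{eqsym}, to chain the boundary values $\hat m^{sol}_{1,\pm}$, $\hat m^{sol}_{2,\pm}$ at $k=0$ into one another up to $O(k^2)$. Your computational route must instead actually verify that the coefficient of $k$ in $\hat m^{sol}_1-\hat m^{sol}_2$ vanishes, and this is where it cannot be closed as announced. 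Indeed, near $k=0$ one has $\hat m^{sol}_2(k,j)=P_{j+1}(k)\,\phi(k,x,t)\E^{-\I kx}$, and on $\R_+$ the scattering relation together with $q+\mathcal R_a=R-\mathcal R_r$, $\mathcal R_r=O(k^2)$ and $\E^{8\I k^3t}=1+O(k^3)$ gives $\hat m^{sol}_1(k,j)=\ol{\hat m^{sol}_2(k,j)}+O(k^2)$, hence
\[
\hat m^{sol}_1(k,j)-\hat m^{sol}_2(k,j)=-2\I\,\im\big[P_{j+1}(k)\phi(k,x,t)\E^{-\I kx}\big]+O(k^2).
\]
The reality of $\phi$ on $[0,\I c]$ kills the constant term (your ``vanishes outright''), but the linear term equals $-2\I k\,\im\,\pa_k\big[P_{j+1}(k)\phi(k,x,t)\E^{-\I kx}\big]\big|_{k=0}$, and none of the identities you list ($q(0)=R(0)$, $q'(0)=R'(0)$, $R^{(l)}(+0)=(-1)^l\ol{R^{(l)}(+0)}$, $\mathcal R_a(0,t)=\pa_k\mathcal R_a(0,t)=0$) forces $\pa_k\big[P_{j+1}\phi\E^{-\I kx}\big]$ to vanish at $k=0$ for fixed $(x,t)$; for instance $P_{j+1}'(0)=-2\I P_{j+1}(0)\sum_{l>j}\kappa_l^{-1}\neq0$ when $j<N$, and $\pa_k\phi(0,x,t)-\I x\phi(0,x,t)$ is generically nonzero. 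Note also that the differentiated symmetry you invoke only yields $\pa_k\hat m^{sol}_1(0)=-\pa_k\hat m^{sol}_2(0)$, i.e.\ that $\hat m^{sol}_1-\hat m^{sol}_2$ is odd to first order; it does not make the linear term vanish. You should replace this part by the paper's symmetry-plus-jump argument, which in any case supplies what is actually used downstream, namely that $\hat m^{sol}_1-\hat m^{sol}_2$ vanishes at the node fast enough for the error vector $\hat m^{sol}M^{-1}$ to remain bounded at $k=0$.
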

\begin{proof} All propositions of Theorem \ref{pre-mod} are proven except of \eqref{appr} and \eqref{esti}. Estimate \eqref{appr} is straightforward from \eqref{eqj}, \eqref{ygr}, \eqref{chino}, \eqref{imp99}, \eqref{mn} (for at least $m_0\geq 4$) and Lemma \ref{lem4}.

For \eqref{esti}, we have the following: (1) the jump matrix $\hat v^{sol}(k,j)$ in a vicinity of point $k=0$ satisfies $\hat v^{sol}(k,j)=\id +O(k^2)$; (2) the vector function $m^{sol}(k,j)$ is bounded there and has continuous limits when approaching the contour;  (3) it  satisfies \eqref{eqsym}. Then
\[\hat m^{sol}_{1,+}(k,j)=\hat m^{sol}_{2,+}(k,j)+O(k^2)=\hat m^{sol}_{1,-}(k,j)+O(k^2)=\hat m^{sol}_{2,-}(k,j)+O(k^2), \quad k\to 0,\]
which in turn implies \eqref{esti}.
\end{proof}

\section{Solution of the model problem and final asymptotic analysis}
\label{sec:mpaa}

The model problem for $\hat m(k,j)$ is trivial: To find a vector function holomorphic in $\C$ and satisfying the symmetry and normalization condition. Its unique solution is the constant vector $m^{mod}(k,j)=(1,1)$, which is the same for all domains $D_j$, $j=0,1, \dots, N$.

The second model problem, namely to find a vector function $m^{mod,sol}(k,j)=\mathcal S(k,j)$ meromorphic in $\C$ and satisfying \eqref{eqsym}--\eqref{eqpo}, was solved in \cite{GT}. The (unique) vector solution $\mathcal S(k,j)$ is given by  (cf.\ \cite[Lemma 2.6 and Theorem 4.4]{GT})
\[ 
\aligned \mathcal S(k,j)&=\left(\mathcal S_1(k,j),\, \mathcal S_2(k,j)\right),\quad \mathcal S_2(k,j)=\mathcal S_1(-k,j),\\
 \mathcal S_1(k,j)&=\frac{1}{1+(2\kappa_j)^{-1}\gamma_j^2(x,t)}\left(
1+\frac{k+\I\kappa_j}{k-\I\kappa_j}(2\kappa_j)^{-1}\gamma_j^2(x,t)\right),\endaligned
\]
where $\gamma_j^2(x,t)$ is defined by \eqref{gamo}.
Verification of the pole, symmetry and normalization conditions is straightforward. 

To apply the standard final asymptotic analysis  for the vector RH problems, we need to construct a matrix solution $M(k,j)$ to the model problem in $D_j^{sol}$, which satisfies the additional symmetry $M(-k,j)=\sigma_1 M(k,j)\sigma_1$.  In $D_j$ it will evidently be the identity matrix. For $D_j^{sol}$ we cannot expect that a bounded invertible symmetric matrix solution exists. Indeed, we observe that 
\[\mathcal S_1(0,j)=\mathcal S_2(0,j),\] and
\[\mathcal S_1(0,j)=\mathcal S_2(0,j)=0, \quad \mbox{for}\quad  1-(2\kappa_j)^{-1}\gamma_j^2(x,t)=0.\]
The set of pairs $(x,t)$ satisfying this condition is a line in $D_j^{sol}$ containing arbitrary large $x$ and $t$. According to \cite{EPT}, for such $(x,t)$  a bounded symmetric invertible matrix model solution does not exist. When admitting poles for $M(k,j)$, one has to ensure that the error vector $\hat m(k,j)M^{-1}(k,j)$ has only removable singularities. 

Let us first construct an antisymmetric vector solution for the model problem in $D_j^{sol}$ with a simple pole at $k=0$. 
We look for a solution of the form
\[ \aligned \mathcal V(k,x,t,j)=\mathcal V(k,j)&=\left(-1+\frac{\rho_j}{k} + \frac{\mu_j}{k-\I\kappa_j},\ 1+\frac{\rho_j}{k} + \frac{\mu_j}{k+\I\kappa_j}\right), \quad \im k>0,\\
 \mathcal V(-k,j)&=-\mathcal V(k,j),\endaligned\] where the constants $\rho_j=\rho_j(x,t)$ and $\mu_j=\mu_j(x,t)$ are chosen to satisfy the pole conditions and the condition $\mathcal V_2(\I\kappa_j,j)=\mathcal S_2(\I\kappa_j,j)$.
Then \[\mathcal V_2(\I\kappa_j,j)=1 +\frac{\rho_j}{\I\kappa_j} +\frac{\mu_j}{2\I\kappa_j}=\frac{1}{1+(2\kappa_j)^{-1}\gamma^2_j(x,t)}=\mathcal S_2(\I\kappa_j,j)\]
and
\[ \aligned \mu_j&=\res_{\I\kappa_j} \mathcal V_1(\I\kappa_j,j)=\I\gamma^2_j(x,t)\mathcal V_2(\I\kappa_j,j)\\ &=\res_{\I\kappa_j} \mathcal S_1(\I\kappa_j,j)=\frac{\I\gamma^2_j(x,t)}{1+(2\kappa_j)^{-1}\gamma^2_j(x,t)}.\endaligned\]
We get
\be\label{munu} 
\rho_j(x,t)=-\frac{\I \gamma_j^2(x,t)}{1+(2\kappa_j)^{-1}\gamma^2_j(x,t)},\quad \mu_j(x,t)= \frac{\I \gamma^2_j(x,t)}{1+(2\kappa_j)^{-1}\gamma^2_j(x,t)}, \quad \rho_j=-\mu_j;\ee
\[
 \mathcal V(k,j)=\left(-1 +\frac{\I\kappa_j\, \mu_j(x,t)}{k(k-\I\kappa_j)},\,1 - \frac{\I\kappa_j\, \mu_j(x,t)}{k(k+\I\kappa_j)}\right).
 \]
In terms of $\mu_j$ the solution $\mathcal S(k,j)=m^{mod, sol}(k,j)$ has the representation
\be\label{mathS}\mathcal S(k,j)=\left( 1 +\frac{\mu_j(x,t)}{k-\I\kappa_j},\ 1 -\frac{\mu_j(x,t)}{k+\I\kappa_j}\right).\ee
For $(x,t)\in D_j^{sol}$  introduce the matrix 
\be\label{matrix} M(k,j)=M(k,j,x,t)=\frac{1}{2}\begin{pmatrix} \mathcal S_1(k,j)-\mathcal V_1(k,j) & \mathcal S_2(k,j)-\mathcal V_2(k,j)\\
\mathcal S_1(k,j)+\mathcal V_1(k,j)& \mathcal S_2(k,j)+\mathcal V_2(k,j)\end{pmatrix}.\ee
From \eqref{gamo}--\eqref{matrix} it follows that
\be\label{matrxsol} M(k,j)=\begin{pmatrix} 1+\frac{\mu_j(x,t)}{2 k} & -\frac{\mu_j(x,t)}{2k}\frac{k-\I\kappa_j}{k+\I \kappa_j}\\ \frac{\mu_j(x,t)}{2k}\frac{k+\I\kappa_j}{k-\I \kappa_j}& 1-\frac{\mu_j(x,t)}{2 k}\end{pmatrix}, \quad \mu_j(x,t)=\frac{\I\gamma_j^2(x,t)}{1+(2\kappa_j)^{-1}\gamma_j^2(x,t)}.\ee
Its evident properties are listed in the following
\begin{lemma}\label{lemS} The matrix $M(k,j)=M(k,j,x,t)$ given by \eqref{gamo}, \eqref{matrxsol} for $(x,t)\in D_j^{sol}$ satisfies
\begin{itemize} \item symmetry: $M(-k,j)=\sigma_1 M(k,j)\sigma_1$;
\item normalization: $M(k,j)\to \id$ as $k\to\infty$;
\item the matrix $M(k,j)$ is  meromorphic in $\C$ with simple poles at $\pm \I \kappa_j$ and $k=0$;
\item $\det M(k,j)=1$;
\item $m^{mod,sol}(k,j)= (1,\,1)M(k,j)$;
\end{itemize}
\end{lemma}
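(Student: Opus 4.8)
The plan is to verify the five listed properties of $M(k,j)$ directly from the explicit formula \eqref{matrxsol}, since by construction $M(k,j)$ is an elementary rational matrix function whose entries are given in closed form. The only nonobvious computation is the determinant, so I would organize the proof around that.

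First I would dispense with the symmetry property: swapping $k\mapsto -k$ in \eqref{matrxsol} interchanges the factors $\frac{k-\I\kappa_j}{k+\I\kappa_j}\leftrightarrow\frac{k+\I\kappa_j}{k-\I\kappa_j}$ and sends $\frac{\mu_j}{2k}\mapsto -\frac{\mu_j}{2k}$; comparing entrywise with $\sigma_1 M(k,j)\sigma_1$ (which swaps both the two rows and the two columns) shows the two matrices agree. Equivalently, one can note that $M$ was \emph{built} from the antisymmetric vector $\mathcal V$ and the ``symmetric'' vector $\mathcal S$ via \eqref{matrix}, and that construction is designed precisely so that $M(-k,j)=\sigma_1 M(k,j)\sigma_1$ holds automatically — indeed, the first row of \eqref{matrix} is $\frac12(\mathcal S-\mathcal V)$ and the second is $\frac12(\mathcal S+\mathcal V)$, and under $k\mapsto -k$ these swap (using $\mathcal S_2(k,j)=\mathcal S_1(-k,j)$ and $\mathcal V(-k,j)=-\mathcal V(k,j)$), which is exactly the action of $\sigma_1\,\cdot\,\sigma_1$ on a $2\times 2$ matrix. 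The normalization $M(k,j)\to\id$ as $k\to\infty$ is immediate since every off-diagonal entry is $O(1/k)$ and the diagonal entries tend to $1$. The pole structure is read off from \eqref{matrxsol}: the explicit denominators are $k$ and $k\mp\I\kappa_j$, all simple, so $M$ is meromorphic on $\C$ with simple poles exactly at $0,\pm\I\kappa_j$; one should also remark that at $k=\I\kappa_j$ only the $(2,1)$ entry is singular and at $k=-\I\kappa_j$ only the $(1,2)$ entry, consistent with the pole conditions \eqref{eqpo}. Finally, $m^{mod,sol}(k,j)=(1,1)M(k,j)$ is a direct one-line check: adding the two rows of \eqref{matrxsol} gives $\bigl(1+\frac{\mu_j}{2k}+\frac{\mu_j}{2k}\frac{k+\I\kappa_j}{k-\I\kappa_j},\ -\frac{\mu_j}{2k}\frac{k-\I\kappa_j}{k+\I\kappa_j}+1-\frac{\mu_j}{2k}\bigr)$, and simplifying each component (common denominator $k-\I\kappa_j$, resp.\ $k+\I\kappa_j$) collapses it to $\bigl(1+\frac{\mu_j}{k-\I\kappa_j},\ 1-\frac{\mu_j}{k+\I\kappa_j}\bigr)=\mathcal S(k,j)$ by \eqref{mathS}.

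The determinant is the one place where a genuine cancellation must occur, and I expect it to be the main (though still modest) obstacle. Computing $\det M(k,j)$ from \eqref{matrxsol} gives
\[
\Bigl(1+\tfrac{\mu_j}{2k}\Bigr)\Bigl(1-\tfrac{\mu_j}{2k}\Bigr)+\tfrac{\mu_j}{2k}\cdot\tfrac{\mu_j}{2k}\cdot\tfrac{k-\I\kappa_j}{k+\I\kappa_j}\cdot\tfrac{k+\I\kappa_j}{k-\I\kappa_j}=1-\tfrac{\mu_j^2}{4k^2}+\tfrac{\mu_j^2}{4k^2}=1,
\]
the two factors $\frac{k-\I\kappa_j}{k+\I\kappa_j}$ and $\frac{k+\I\kappa_j}{k-\I\kappa_j}$ cancelling exactly — this is the structural reason the symmetrized construction \eqref{matrix} works, and it can also be seen a priori from $\det M=\mathcal V_1\mathcal S_2-\mathcal V_2\mathcal S_1$ evaluated via the residue relations that fix $\mu_j,\rho_j$. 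One subtlety worth a sentence: although $\det M\equiv 1$ as a rational function, each of $\mathcal V$ and $\mathcal S$ has a pole at $k=\I\kappa_j$, so the computation of $\det M$ should be understood as an identity of meromorphic functions (or checked away from the poles and then extended by continuity of the rational expression). With these five items verified, the lemma is complete; the construction is designed so that no estimates are needed here, all the analytic work having been done in Theorem~\ref{pre-mod}.
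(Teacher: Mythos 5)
Your direct entrywise verification from \eqref{matrxsol} is correct and is exactly the intended argument --- the paper itself offers no proof, introducing the lemma with ``its evident properties are listed in the following,'' so your computations (symmetry, normalization, pole locations, $\det M=1$ via the cancellation of $\frac{k-\I\kappa_j}{k+\I\kappa_j}\cdot\frac{k+\I\kappa_j}{k-\I\kappa_j}$, and row-summing to recover $\mathcal S(k,j)$) supply precisely the routine checks being taken for granted. The only blemish is the parenthetical claim that $\det M=\mathcal V_1\mathcal S_2-\mathcal V_2\mathcal S_1$: from \eqref{matrix} one actually gets $\det M=\tfrac12\left(\mathcal S_1\mathcal V_2-\mathcal S_2\mathcal V_1\right)$, but since your main determinant computation proceeds directly from \eqref{matrxsol}, this aside does not affect the validity of the proof.
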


\begin{theorem}
Let $\hat m^{sol}(k,j)$ be the solution of RHP \eqref{symW}--\eqref{gamo} and let
\[
m^{err}(k,j):= \hat m^{sol}(k,j) M^{-1}(k,j), \quad (x,t)\in D_j^{sol}.
\] 
Then
\begin{enumerate}
\item the vector function $m^{err}(k,j)$ does not have singularities at $\pm\I\kappa_j$;
\item  $m^{err}(k,j)$ does not have a singularity at $k=0$;
\item the following estimate is valid uniformly for $(x,t)\in D_j^{sol}$:
\[
\|k^s\,M(k,j)\mathcal W(k,j) [M(k,j)]^{-1}\|_{L^1(\hat \Sigma)\cap L^\infty (\hat\Sigma)}=O(t^{-\nu}),\]
where
\[ \nu=\min\{m_0-3,\ \beta+1\}, \ s=0,1,2.\]
\end{enumerate}
\end{theorem}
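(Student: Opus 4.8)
The plan is to verify the three claims by working directly from the explicit formulas for $M(k,j)$ in \eqref{matrxsol} and the analytic structure of $\hat m^{sol}(k,j)$ established in Theorem~\ref{pre-mod}. For item (1), I would compute $M^{-1}(k,j)$ explicitly using $\det M(k,j)=1$ from Lemma~\ref{lemS}, so that $M^{-1}(k,j)$ is the classical adjugate. Since $m^{mod,sol}(k,j)=(1,1)M(k,j)=\mathcal S(k,j)$ has exactly the same pole structure at $\pm\I\kappa_j$ as $\hat m^{sol}(k,j)$, and both satisfy the identical residue (pole) conditions \eqref{eqpo}, the standard argument applies: near $k=\I\kappa_j$ write $\hat m^{sol}(k,j)=\hat m^{sol}_{\mathrm{reg}}(k,j)+\frac{1}{k-\I\kappa_j}\hat A_j$ where $\hat A_j$ is the residue matrix (of rank relating the two components via $\I\gamma_j^2(x,t)$), and similarly for $M$; then the singular parts of the product $\hat m^{sol}(k,j)M^{-1}(k,j)$ cancel because the residue conditions force the coefficient of $(k-\I\kappa_j)^{-1}$ to vanish. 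The symmetric statement at $-\I\kappa_j$ follows from the symmetry $\hat m^{sol}(-k,j)=\hat m^{sol}(k,j)\sigma_1$ and $M(-k,j)=\sigma_1 M(k,j)\sigma_1$.

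For item (2), the singularity of $M^{-1}(k,j)$ at $k=0$ is at most simple, with the singular part proportional to $\mu_j(x,t)$ times a fixed nilpotent-type matrix; reading off \eqref{matrxsol}, the $k=0$ Laurent coefficient of $M^{-1}(k,j)$ is $\frac{\mu_j(x,t)}{2k}\bigl(\begin{smallmatrix}-1&1\\-1&1\end{smallmatrix}\bigr)$ up to sign, i.e.\ a rank-one matrix whose range is spanned by $(1,-1)^{\!\top}$ (equivalently, whose rows are multiples of $(-1,1)$). Then the singular part of $\hat m^{sol}(k,j)M^{-1}(k,j)$ at $k=0$ is proportional to $\frac{1}{k}\bigl(\hat m^{sol}_1(0,j)-\hat m^{sol}_2(0,j)\bigr)\cdot(\text{fixed vector})$, and by \eqref{esti} we have $\hat m^{sol}_1(k,j)-\hat m^{sol}_2(k,j)=O(k^2)$ near $k=0$, so this contribution is in fact $O(k)$ — the pole is removable. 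One should note $m^{err}(k,j)$ still has a removable singularity rather than being literally holomorphic, which is all that is needed for the subsequent small-norm RH analysis.

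For item (3), I would estimate $M(k,j)\mathcal W(k,j)M^{-1}(k,j)$ pointwise on $\hat\Sigma$. The key point is that $M(k,j)$ and $M^{-1}(k,j)$ are uniformly bounded on $\hat\Sigma$ away from a fixed neighborhood of $k=0$ (their only pole on $\hat\Sigma$ sits at $k=0$, while $\pm\I\kappa_j$ lie off $\hat\Sigma$ in this $D_j^{sol}$ configuration since the circles $\mathbb T_j,\mathbb T_j^*$ are absent from $\hat v^{sol}$), with bounds depending on $\mu_j(x,t)$ which is itself uniformly bounded in $D_j^{sol}$ because $1+(2\kappa_j)^{-1}\gamma_j^2(x,t)\ge 1$. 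So on $\hat\Sigma\setminus\{|k|<\delta_0\}$ the conjugation by $M$ costs only a constant factor, and the estimate \eqref{nusi} $\|k^s\mathcal W(k,j)\|_{L^1\cap L^\infty}=O(t^{-\nu})$ transfers directly. Near $k=0$, $M(k,j)$ and $M^{-1}(k,j)$ blow up like $|k|^{-1}$, so $M\mathcal W M^{-1}$ is a priori only $O(|k|^{-2})$ times $\mathcal W$; but here I invoke \eqref{appr}, $\mathcal W(k,j)=O(k^2)$ as $k\to 0$ on $\hat\Sigma$, which exactly compensates the two factors of $|k|^{-1}$, so $M\mathcal W M^{-1}$ stays bounded near $0$ and the $k^s$-weighted $L^1\cap L^\infty$ norm over $\{|k|<\delta_0\}$ is again $O(t^{-\nu})$. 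The main obstacle — and the reason this matrix $M$ was constructed with a pole at $k=0$ rather than sought as a bounded invertible symmetric solution — is precisely this interplay at $k=0$: one must use \eqref{esti} for the removability in (2) and \eqref{appr} for the norm estimate in (3), and both rely on the $O(k^2)$ vanishing that was carefully arranged in Lemma~\ref{lem1}, \eqref{mathcalG}, and Theorem~\ref{pre-mod}. Everything else is bookkeeping with the explicit rational expressions.
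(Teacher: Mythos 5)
Your proposal is correct and follows essentially the same route as the paper: explicit residue cancellation at $\pm\I\kappa_j$ using the pole conditions and the adjugate formula for $M^{-1}$ (via $\det M=1$), removability at $k=0$ from the rank-one structure of the residue of $M^{-1}$ there combined with \eqref{esti}, and the conjugation estimate from the uniform boundedness of $\mu_j(x,t)$ in $D_j^{sol}$ away from $k=0$ together with the $O(k^2)$ vanishing \eqref{appr} near $k=0$. One cosmetic slip: the residue of $M^{-1}(k,j)$ at $k=0$ is $\frac{\mu_j}{2}\left(\begin{smallmatrix}-1&-1\\ 1&1\end{smallmatrix}\right)$, whose \emph{columns} (not rows) are proportional to $(1,-1)^{\top}$, which is precisely what makes the left product with the row vector $\hat m^{sol}$ pick out $\hat m^{sol}_2-\hat m^{sol}_1=O(k^2)$, so your conclusion stands unchanged.
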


\begin{proof}
Consider first the point $ \I \kappa_j$. By definition,
\be\label{merf}
\aligned
&m^{err}(k,j)= \left(\hat m^{sol}_1(k,j), \ \hat m^{sol}_2(k,j)\right)\begin{pmatrix} 
1-\frac{\mu_j(x,t)}{2 k}  & \frac{\mu_j(x,t)}{2k}\frac{k-\I\kappa_j}{k+\I \kappa_j}\\ -\frac{\mu_j(x,t)}{2k}\frac{k+\I\kappa_j}{k-\I \kappa_j}& 1+\frac{\mu_j(x,t)}{2 k}\end{pmatrix},\\ \nonumber
&\lim_{k\to\I \kappa_j} (k-\I\kappa_j)\hat m^{sol}_1(k,j)=\I\gamma^2_j(x,t)\hat m^{sol}_2(\I\kappa_j,j).
\endaligned
\ee
Since $[M(\cdot,j)]^{-1}_{12}=O(k-\I\kappa_j)$, then $m^{err}_2(\I\kappa_j,j)$ is well defined. For the first element of this vector we have using \eqref{munu}
\[\lim_{k\to\I \kappa_j} (k-\I\kappa_j)m^{err}_1(\I\kappa_j,j)=\I\gamma^2_j(x,t)\hat m^{sol}_2(\I\kappa_j,j)\left(1 -\frac{\mu_j(x,t)}{2\I\kappa_j}\right)-
\mu_j(x,t)\hat m^{sol}_2(\I\kappa_j,j)=0.\]
The arguments for the point $-\I\kappa_j$ are the same due to symmetry,
$m^{err}(k,j)=m^{err}(-k,j)\sigma_1$.
To prove item 2, note that by \eqref{esti} and \eqref{merf},
\[\aligned m^{err}(k,j)&= (K,\,K) M^{-1}(k,j) +O(k)\\ &= K\left(1 -\frac{\mu_j}{2k}(1 + \frac{k+\I\kappa_j}{k-\I \kappa_j}),\quad 1 +\frac{\mu_j}{2k}(1 + \frac{k-\I\kappa_j}{k+\I \kappa_j})\right)+ O(k)\\
&=K\left(1-\frac{\mu_j(x,t)}{k-\I\kappa_j},\ 1+\frac{\mu_j(x,t)}{k+\I\kappa_j}\right) + O(k), \quad K=
m^{err}_1(+0+\I0,j).
\endaligned\]
To prove item 3, we observe that $\mu_j(x,t)$ defined by  \eqref{munu}, \eqref{matrxsol}, \eqref{gamo}
is uniformly bounded in $D_j^{sol}$. Therefore,
$M(k,j)$ and $M^{-1}(k,j)$ admit an estimate from above of the form $1+\frac{C}{|k|}$, where $C$ does not depend on $(x,t)$ and $k\in \hat\Sigma$. So outside a small vicinity of $k=0$ the estimate in item 3  is fulfilled because of \eqref{nusi}. Near $k=0$ we apply \eqref{appr}.
\end{proof}

The rest of the final asymptotic analysis is a trivial modification of the standard "small norm" arguments
for symmetric vector RH problems.
Indeed, let $\hat m(k,j)$ and $\hat m^{sol}(k,j)$ be as in Theorem \ref{pre-mod} and let $M(k,j)$ be defined by \eqref{matrix} in $D_j^{sol}$ and by the identity matrix in $D_j$. Set
\[m^{er}(k)=\begin{cases}\begin{array}{ll} \hat m(k,j), & (x,t)\in D_j,\\
\hat m^{sol}(k,j)[M(k,j)]^{-1}, & (x,t)\in D_j^{sol},\end{array}\end{cases}\quad j=0, \dots, N,
\]
and define
\[ \mathcal W^{er}(k)= M(k,j)\mathcal W(k,j) [M(k,j)]^{-1}, \quad j=0, \dots, N.
\]
Then $m^{er}(k)=m^{er}(-k)\sigma_1$ is the unique piecewise holomorphic solution of the jump problem
\[
m^{er}_+(k)=m^{er}_-(k)(\id + \mathcal W^{er}(k)), \quad k\in\hat\Sigma,
\]
where 
\[
\mathcal W^{er}(k)=\sigma_1 \mathcal W^{er}(-k) \sigma_1, \quad k\in\hat\Sigma,
\]
and
\be\label{nusi1}\|k^s\,\mathcal W^{er}(k)\|_{L^1(\hat \Sigma)\cap L^\infty (\hat\Sigma)}=O(t^{-\nu}), \quad \nu=\min\{m_0-3,\ \beta+1\}\geq 1, \quad s=0,1,2. \ee
This vector function is continuous up to the boundaries and satisfies the normalization condition
\[
\lim_{\kappa\to\infty} m^{er}(\I\kappa) =  (1, \ 1).
\]

Let $\mathfrak C$ denote the Cauchy operator associated with $\hat\Sigma$,
\[(\mathfrak C h)(k)=\frac{1}{2\pi\I}\int_{\hat\Sigma}h(s)\frac{ds}{s-k}, \qquad k\in\C\setminus\hat\Sigma,
\]
where $h= (h_1,  h_2)\in L^2(\hat\Sigma)$. 
Let  $\mathfrak C_+ f$ and $\mathfrak C_- f$ be its non-tangential limit values from the left and right sides of 
$\hat\Sigma$, respectively.

As usual, we introduce the operator $\mathfrak C_{W}:L^2(\hat\Sigma)\cup L^\infty(\hat\Sigma)\to
L^2(\hat\Sigma)$ by $\mathfrak C_{W} f=\mathfrak C_-(f \mathcal W^{er})$, where $\mathcal W^{er}$ is our error matrix. 
Then
\[
\|\mathfrak C_{W}\|_{L^2(\hat\Sigma)\to L^2(\hat\Sigma)}\leq C\|\mathcal W^{er}\|_{L^\infty(\hat\Sigma)}\leq O(t^{-\nu})
\] 
as well as
\be\label{6w}
\|(\id - \mathfrak C_{W^{er}})^{-1}\|_{L^2(\hat\Sigma)\to L^2(\hat\Sigma)}\leq \frac{1}{1-O(t^{-\nu})}
\ee
for sufficiently large $t$. Consequently, for $t\gg 1$, we may define a vector function
\[
w(k) =(1, \ 1) + (\id - \mathfrak C_{W})^{-1}\mathfrak C_{W}\big((1, \ 1)\big)(k).
\]
By  \eqref{6w},
\begin{align}\nn
\|w(k) - (1, \ 1)\|_{L^2(\hat\Sigma)} &\leq \|(\id - \mathfrak C_{W})^{-1}\|_{L^2(\hat\Sigma)\to L^2(\hat\Sigma)} \|\mathfrak C_{-}\|_{L^2(\hat\Sigma)\to L^2(\hat\Sigma)} \|\mathcal W^{err}\|_{L^\infty(\hat\Sigma)}\\
&= O(t^{-\nu}).\label{estmu}
\end{align}
With the help of $w$, the function $m^{er}(k)$ can be represented as 
\[
 m^{er}(k)=(1, \ 1) +\frac{1}{2\pi\I}\int_{\hat\Sigma}\frac{w(z) \mathcal W^{er}(z)dz}{z-k},
\]
and in virtue of \eqref{estmu} and \eqref{nusi1} we obtain as $k\to +\I\infty$,
\[
m^{er}(k) = (1, \ 1) + \frac{1}{2\pi\I } \int_{\hat\Sigma} \frac{(1, \ 1)\mathcal W^{er}(z)}{z-k} dz + E(k),
\]
where
\[
|E(k)|\leq \frac{C}{|k|}\|\mathcal W^{er}\|_{L^2(\hat\Sigma)}\|w (z)- (1, \ 1)\|_{L^2(\hat\Sigma)}\leq \frac{O(t^{-\nu-1})}{|k|},\quad k \rightarrow \infty.
\] 
The term $O(t^{-\nu-1})$ is uniformly bounded with respect to $(x,t)\in\mathcal D$.
In the regime $\re k=0$, $\im k \to +\infty$, we have
\begin{align*}
\frac{1}{2\pi\I } \int_{\hat\Sigma} \frac{(1, \ 1)\mathcal W^{er}(z)}{k-z} dz &=  \frac{f_0(x,t)}{2\I k t^{\nu} }(1, \ -1) + \frac{f_1(x,t)}{2 k^2 t^{\nu}} (1, \ 1)\\
&\quad + O(t^{-\nu})O(k^{-3}) +O(t^{-\nu-1})O(k^{-1}),
\end{align*}
 where $f_{0,1}(x,t)$ are scalar functions uniformly bounded in $\mathcal D$. Furthermore, $O(k^{-s})$ are vector functions depending only on $k$ and $O(t^{-\nu})$, $O(t^{-\nu-1})$ are as above. 
Hence, 
\[
\aligned
& \hat m^{sol}(k,j) = m^{er}(k,j) M(k,j) 
= \mathcal S(k,j) + \frac{f_0(x,t)}{2\I k t^\nu} (1, \ -1)M(k,j) \\
& \quad + \frac{f_1(x,t)}{2 k^2 t^\nu}\mathcal S(k,j) + O(t^{-\nu})O(k^{-3}) +O(t^{-\nu-1})O(k^{-1}), \quad (x,t)\in D_j^{sol};\quad  j=1, \dots,N;
\endaligned
\]
and
\be\label{vne}\aligned
\hat m(k,j)& = m^{er}(k,j)=
 (1,\ 1) + \frac{f_0(x,t)}{2\I k t^\nu} (1, \ -1)
+ \frac{f_1(x,t)}{2 k^2 t^\nu}(1,\ 1) \\
&\quad + O(t^{-\nu})O(k^{-3}) +O(t^{-\nu-1})O(k^{-1}), \quad (x,t)\in D_j;
\quad j=0, \dots, N.
\endaligned\ee
From  \eqref{matrxsol} it follows that
$(1,\ -1)M(k,j)= (1,\ -1) +O(k^2)$. Therefore, \eqref{2} implies that for $(x,t)\in D_j^{sol}$,
\be\label{comper}
m_1(k)m_2(k) = \hat m_1^{sol}(k,j)\hat m_2^{sol}(k,j) = \mathcal S_1(k,j)\mathcal S_2(k,j) + O(t^{-\nu})O(k^{-2}), \quad k\to\infty.
\ee
By \eqref{mathS},
\[\mathcal S_1(k,j)\mathcal S_2(k,j)-1=\frac{2\I\kappa_j\mu_j(x,t) - \mu_j^2(x,t)}{(k-\I\kappa j)(k+\I\kappa_j)}= \frac{-2\kappa_j\gamma_j^2(x,t)}{k^2\Big(1 + (2\kappa_j)^{-1}\gamma_j^2(x,t)\Big)^2}(1 + o(1)).
\]
Comparing this with \eqref{comper} and \eqref{main1} we conclude that in the region $D_j^{sol}$,
\be\label{form} q(x,t)=q_j^{sol}(x,t) + O(t^{-\nu});\qquad q_j^{sol}(x,t):=\frac{-4\kappa_j\gamma_j^2(x,t)}{\left(1 + (2\kappa_j)^{-1}\gamma_j^2(x,t)\right)^2}.\ee
On the other hand,
\[q_j^{sol}(x,t)=-\frac{8\kappa_j^2}{\left(\frac{\sqrt{2\kappa_j}}{\gamma_j(x,t)} +\frac{\gamma_j(x,t)}{\sqrt{2\kappa_j}}\right)^2} = -\frac{8\kappa_j^2}{\left(\E^{\kappa_j x- 4\kappa_j^3 t + \Delta_j} +\E^{-\kappa_j x+4\kappa_j^3 t - \Delta_j}\right)^2},
\]
where
\[\Delta_j=-\frac{1}{2}\log\frac{\gamma_j^2}{2\kappa_j}-\sum_{i=j+1}^N\log\frac{\kappa_j - \kappa_i}{\kappa_i + \kappa_j}.\]
Thus,
\[q_j^{sol}(x,t)=-\frac{2\kappa_j^2}{\cosh^2\Big(\kappa_j x - 4\kappa_j^3 t -\frac{1}{2}\log\frac{\gamma_j^2}{2\kappa_j}-\sum_{i=j+1}^N\log\frac{\kappa_j - \kappa_i}{\kappa_i + \kappa_j}\Big)}.
\]
Note that in $\mathcal D\setminus D_j^{sol}$, this function admits the estimate $O(\E^{-C(\varepsilon) t})$, and
 taking into account the weaker estimate \eqref{form}, we conclude that
\[q(x,t)=\sum_{j=1}^N q_j^{sol}(x,t) + O(t^{-\nu}), \qquad (x,t)\in\bigcup_{j=1}^N D_j^{sol}.\] 
On the other hand, \eqref{vne} implies
\[q(x,t)=O(t^{-\nu}), \qquad (x,t)\in\bigcup_{j=0}^N D_j.\]
This proves Theorem  \ref{mainint}.
\vskip 5mm
{\bf Acknowledgments} I.E. is partially supported by the program "Support of priority research and scientific and technical developments" by  the National Academy of Sciences of Ukraine.


\begin{thebibliography}{XXX}




\bibitem{DZ} P. Deift and X. Zhou, {\em A steepest descent method for oscillatory
Riemann--Hilbert problems}, Ann. of Math. (2) {\bf 137} (1993), 295--368.
\doi{10.2307/2946540}

\bibitem{EGKT} I. Egorova, Z. Gladka, V. Kotlyarov, and G. Teschl,
{\em Long-time asymptotics for the Korteweg-de Vries equation with steplike initial data},
Nonlinearity {\bf 26} (2013), 1839--1864.
\doi{10.1088/0951-7715/26/7/1839}

\bibitem{EGLT} I. Egorova, Z. Gladka, T.-L. Lange, and G. Teschl, {\em Inverse scattering theory for Schr\"{o}dinger operators with steplike potentials}, Zh. Mat. Fiz. Anal. Geom. 11, 123-158 (2015).

\bibitem{EMT1} I. Egorova, J. Michor, and G. Teschl, {\em Soliton asymptotics for the KdV shock problem via classical inverse scattering},
\arxiv{2109.08423}

\bibitem{EPT} I. Egorova, M. Piorkowski, G. Teschl, {\em Asymptotics of KdV shock waves via the Riemann--Hilbert approach},
Preprint.

\bibitem{GR} S. Grudsky and A. Rybkin,
{\em On classical solutions of the KdV equation}, Proc. Lond. Math. Soc. (3) {\bf 121:2} (2020), 354--371.
\doi{10.1112/plms.12326}

\bibitem{GT} K. Grunert and G. Teschl,
{\em Long-time asymptotics for the Korteweg--de Vries equation via nonlinear steepest descent},
Math. Phys. Anal. Geom. {\bf 12} (2009), 287--324.
\doi{10.1007/s11040-009-9062-2}

\bibitem{gp1} A.V. Gurevich and L.P. Pitaevskii, {\em Decay of initial discontinuity in the Korteweg--de Vries equation}, JETP Letters {\bf 17:5}, 193--195 (1973).

\bibitem{gp2} A. V. Gurevich, L.P. Pitaevskii, {\em Nonstationary structure of a collisionless shock wave}, Soviet Phys. JETP {\bf 38:2}, 291--297 (1974).

\bibitem{its1} A.R. Its, {\em Asymptotic behavior of the solutions to the nonlinear Schr\"odinger equation,
and isomonodromic deformations of systems of linear differential equations}, Soviet. Math. Dokl. {\bf 24:3} (1981), 452--456.

\bibitem{Kh} E.Ya. Khruslov,
{\em Asymptotics of the Cauchy problem solution to the KdV equation with step-like initial data},
Matem.sborn. {\bf 99} (1976), 261--281.

\bibitem{lenells1}J.  Lenells, {\em The nonlinear steepest descent method for Riemann--Hilbert problems 
of low regularity}, Indiana Univ. Math. J. {\bf 66:4} (2017), 1287--1332.

\bibitem{ma} S.V. Manakov, {\em Nonlinear Frauenhofer diffraction}, Sov. Phys. JETP {\bf 38:4} (1974), 693--696.

\end{thebibliography}
\end{document}